    \let\subsubsection\subparagraph
    \title  {The Akbulut cork is not universal}
    \author{Roberto Ladu}
		\date{\vspace{-2ex}}
\newcommand{\emb}{\mathbb{e}}
\newcommand{\interior}[1]{\mathrm{int}({#1})}
\newcommand{\FamilyVanishing}{\mathcal{C}_{\Delta = 0}}
\newcommand{\FamilyVanishingMir}{\overline{\mathcal{C}}_{\Delta = 0}}
\begin{document}

\maketitle            

\begin{abstract} We exhibit infinitely many exotic pairs of simply-connected, closed  $4$-manifolds not related by any cork of the infinite family $W_n$ constructed by Akbulut and Yasui whose first member is the Akbulut cork. In particular, the Akbulut cork is not universal.
Moreover we show that, in the setting of manifolds with boundary, there are no $\partial$-universal corks, i.e. there does not exist
a cork which relates any exotic pair of simply-connected $4$-manifolds with boundary.
\end{abstract}

%\tableofcontents

\section{Introduction}
We work in the category of smooth, compact, oriented manifolds with boundary.
The homeomorphism and diffeomorphism relations will be denoted respectively by $\approx$ and $\cong$, see Section~\ref{sec:Conventions} for more on our conventions.
An (abstract) cork is a pair $(C,f)$ where $C$ is a contractible  $4$-manifold and $f:\partial C\to \partial C$ is an involution that
does not extend to a diffeomorphism of $C$. The first cork $(W_1, f_{W_1})$ was discovered by Akbulut \cite{Akbulut91}, and fits in a family of corks  $\{(W_n,f_{W_n})\}_{n\in \N}$ \cite{AkbulutYasui09}.

A pair of $4$-manifolds $(X_0,X_1)$ is \emph{related by a cork} $(C,f)$ if there exists an embedding $\emb: C\to X_0$ such that
$X_0\setminus \interior{\emb(C)} \bigcup_{\emb\circ f} C \cong X_1$.
It is known that any exotic pair $(X_0, X_1)$ i.e. $X_0\approx X_1$, $X_0\not\cong X_1$, of simply-connected, closed $4$-manifolds
is related by some cork \cite{CHFS, Matveyev}.  In  \cite{AkbulutYasui09} it was posed the question, therein attributed to Akbulut,  whether the cork $(W_1, f_{W_1})$ is \emph{universal}, i.e. relates any exotic, simply-connected, closed  pair.
As noted in \cite{AKMREquivariantCorks}, it is unlikely though conceivable that there exists a universal cork. Therefore we recast this question in the form of a conjecture:
\begin{conjecture}[Universal cork] There is no universal cork, i.e. a cork $(U,f_U)$ relating any exotic pair of  simply-connected, closed $4$-manifolds.
\end{conjecture}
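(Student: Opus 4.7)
The plan is to rule out universality for every candidate cork $(U, f_U)$ by producing, for each such $U$, an exotic pair of simply-connected closed $4$-manifolds that cannot be related by $(U, f_U)$. If $(X_0, X_1)$ is related by $(U, f_U)$ via an embedding $\emb : U \hookrightarrow X_0$, then both $X_i$ contain the common complement $V := X_0 \setminus \interior{\emb(U)}$ as a codimension-$0$ submanifold with contractible complement, and $X_0 = V \cup_{\emb} U$, $X_1 = V \cup_{\emb \circ f_U} U$. Thus it suffices to exhibit, for each fixed $(U, f_U)$, an exotic pair admitting no such common decomposition.

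I would first associate to $(U, f_U)$ a numerical complexity $c(U, f_U)$ -- for instance, the minimal handle number of $U$, or the spread of Heegaard Floer $d$-invariants of the integer homology sphere $\partial U$ together with the action of $f_U$ on $HF^+(\partial U)$. Next, I would seek a smooth invariant $\nu$ of closed, simply-connected $4$-manifolds (drawn from relative Seiberg--Witten, Bauer--Furuta, or involutive Heegaard Floer theory) with the property that whenever $(X_0, X_1)$ is $(U,f_U)$-related, the discrepancy $|\nu(X_0) - \nu(X_1)|$ is bounded by a function $F(c(U,f_U))$ depending only on the cork. Finally, by iterated Fintushel--Stern knot surgery on a fixed background manifold, or by varying the multiplicities of log transforms on an elliptic surface, I would construct a sequence of exotic pairs whose $\nu$-discrepancy grows without bound, eventually surpassing $F(c(U, f_U))$.

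The main obstacle lies in the middle step. The cobordism maps of Heegaard Floer or Seiberg--Witten theory are naturally indexed by the embedded complement $V$, not by the cork $U$ alone, and the change $\emb \mapsto \emb \circ f_U$ alters the gluing in ways that can in principle have arbitrary Floer-theoretic consequences. Bounding the discrepancy in terms of $U$ only requires an intrinsic invariant of $(U, f_U)$ that dominates all such changes, and no such invariant is presently available in full generality. This is why partial progress -- such as the present paper's ruling out of the family $\{W_n\}$, together with the establishment of the $\partial$-universal version -- is what is currently within reach, while the full conjecture must, for now, stand as a conjecture.
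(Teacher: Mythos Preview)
The statement is a \emph{conjecture}, and the paper does not claim to prove it in full. Your proposal correctly recognizes this: you outline a strategy and then acknowledge that the middle step---bounding the discrepancy of a smooth invariant purely in terms of data attached to the cork---cannot at present be carried out for an arbitrary cork. So there is no proof here to compare, and you say so yourself.

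That said, it is worth noting that the paper's partial results fit exactly into your template. For Theorem~\ref{thm:DeltaZeroNotUniv} and Corollary~\ref{thm1}, the r\^ole of your cork-invariant $c(U,f_U)$ is played by the \emph{difference element} $\Delta \in \HMred_{-1}(\partial U;\F)$, and the smooth invariant $\nu$ is the mod-$2$ Seiberg--Witten invariant. When $\Delta=0$ (which holds for $(-W_n,f_{W_n})$ because $\HMred_{-1}(-\partial W_n;\F)=0$), the bound is $F=0$: a $(U,f_U)$-twist cannot change the set of mod-$2$ basic classes at all. The exotic pairs are then built from complex surfaces of general type versus blow-ups of reversed symplectic manifolds, chosen so that one side has mod-$2$ basic classes and the other does not. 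This is precisely your three-step plan, executed in the special case where the ``bound'' is zero. For the $\partial$-universal conjecture (Theorem~\ref{thm2}), the paper instead uses Morgan--Szab\'o complexity of $h$-cobordisms---closer to your ``minimal handle number'' suggestion---combined with the Akbulut--Ruberman construction to transfer complexity from corks to absolutely exotic pairs with boundary; this works because in the bounded setting one can manufacture exotic pairs of arbitrarily high complexity, a move not currently available in the closed case.
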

In this paper we prove this conjecture for several  corks  including the corks $(W_n,f_{W_n})$ and their mirrors  i.e. the cork obtained by reversing the orientation of $W_n$.
In particular, the case $n=1$ answers Akbulut's question from \cite{AkbulutYasui09}.

The difference element of a cork \cite{Ladu} is an invariant of corks taking values  in the Floer homology of the cork boundary (see Section~\ref{sec:DifferenceElement}). Denote by $\FamilyVanishing$ the set of corks with vanishing difference element and 
by $\FamilyVanishingMir$ the set of corks $(C,f)$ such that $(-C,f)\in \FamilyVanishing$. We prove the following theorem:
\begin{theorem} \label{thm:DeltaZeroNotUniv} There exist infinitely many exotic pairs $(X_0,X_1)$ of simply-connected, closed $4$-manifolds,
such that  \begin{enumerate}
					\item \label{itema} $(X_0,X_1)$ and $(-X_0, -X_1)$ are not  sequentially related  by $\FamilyVanishing$, 
					\item \label{itemb} $(X_0,X_1)$ and $(-X_0, -X_1)$ are not  sequentially related  by $\FamilyVanishingMir$,
				\end{enumerate}
				where $(X_0,X_1)$ is sequentially related by a set of corks $\mathcal S$ if $X_1 $ can be obtained from  $X_0$ with a \emph{sequence} of cork twists using corks belonging to $\mathcal S$ (see Def.~\ref{Def:sequentiallyRelated}).
\end{theorem}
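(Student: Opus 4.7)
The strategy is that the difference element should precisely measure the obstruction to a cork twist preserving some Floer-theoretic invariant of the ambient closed $4$-manifold, so that a cork in $\FamilyVanishing$ acts trivially on such an invariant; this invariance then extends to arbitrary sequences of $\FamilyVanishing$-twists by induction. The theorem reduces to exhibiting infinitely many exotic pairs distinguished by such an invariant in all the orientation/mirror combinations requested by items~\ref{itema} and~\ref{itemb}.

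The first step is to formulate an invariance lemma by unpacking the construction of $\Delta$ from Section~\ref{sec:DifferenceElement} and from \cite{Ladu}: an embedding of a cork $(C,f)$ into a closed simply-connected $4$-manifold $X$ with $b_2^+\geq 2$ decomposes the Seiberg--Witten (or Ozsv\'ath--Szab\'o mixed) invariant of $X$ as a pairing between the relative invariant of the cork complement, valued in some Floer-type homology of $\partial C$, and a class associated to $C$ itself. The difference element measures exactly the discrepancy between this class and its cork-twisted counterpart, so $\Delta(C,f)=0$ forces the invariants of $X_0$ and $X_1$ to agree for every $\mathrm{Spin}^{c}$ structure. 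Since every intermediate manifold in a sequence of cork twists is again simply-connected with $b_2^+\geq 2$ (contractibility of the cork preserves the intersection form), induction on the length of the sequence extends invariance to sequential $\FamilyVanishing$-relations.

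For the exotic pairs I would use a Fintushel--Stern family: fix an elliptic surface $E(n)$ with $n\geq 2$ containing a cusp neighbourhood, and let $X_K := E(n)_K$ be the result of knot surgery along a knot $K$ of nontrivial Alexander polynomial. Choosing $K$ from an infinite collection of knots with pairwise distinct symmetrised Alexander polynomials produces infinitely many pairwise homeomorphic but non-diffeomorphic manifolds whose Seiberg--Witten fingerprints differ pairwise and differ from that of $E(n)$. To settle~\ref{itema} for both $(X_0,X_1)$ and $(-X_0,-X_1)$ I would invoke charge-conjugation symmetry of $SW$ and the fact that basic classes behave predictably under orientation reversal; in concrete families like $E(n)_K$ this is automatic. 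For~\ref{itemb}, the key observation is that $(C,f)\in\FamilyVanishingMir$ if and only if $(-C,f)\in\FamilyVanishing$, so a $\FamilyVanishingMir$-twist applied to $X$ is exactly a $\FamilyVanishing$-twist applied to $-X$, reducing~\ref{itemb} to~\ref{itema} for the orientation-reversed pair.

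The substantive step is the invariance lemma: one must verify cleanly that the vanishing of $\Delta$, a priori an element of a Floer homology of $\partial C$, implies an equality of closed-manifold invariants rather than merely of the relative invariant of the complement. This requires a naturality statement for the gluing pairing under cork twists, which is the main technical content inherited from \cite{Ladu}. Once this is in place, the construction of the examples, the bookkeeping for orientations, and the reduction of~\ref{itemb} to~\ref{itema} are all routine.
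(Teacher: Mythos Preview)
Your overall strategy matches the paper's: an invariance lemma showing that a cork with $\Delta=0$ preserves the mod-$2$ Seiberg--Witten basic classes (this is Proposition~\ref{prop:DeltaVanNoVar}), extended to sequences by induction, together with the reduction of item~\ref{itemb} to item~\ref{itema} via the observation that a $\FamilyVanishingMir$-twist on $X$ is a $\FamilyVanishing$-twist on $-X$ (Lemma~\ref{lemma:CMinusCrelates}). Those pieces are fine.

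The gap is in your choice of examples and your handling of orientation reversal. You propose knot-surgered elliptic surfaces $E(n)_K$ and assert that the $(-X_0,-X_1)$ half of item~\ref{itema} follows from ``charge-conjugation symmetry of $SW$ and the fact that basic classes behave predictably under orientation reversal''. This conflates two unrelated symmetries: charge conjugation gives $SW_X(\mathfrak s)=\pm SW_X(\bar{\mathfrak s})$ on the \emph{same} oriented manifold, but says nothing about $SW_{-X}$. There is in general no relationship between the Seiberg--Witten invariants of $X$ and of $-X$, and the Fintushel--Stern formula computes $SW_{E(n)_K}$, not $SW_{-E(n)_K}$. You therefore have no argument that $-E(n)_K$ and $-E(n)_{K'}$ are distinguished by mod-$2$ Seiberg--Witten invariants, and without that the invariance lemma yields no obstruction for the pair $(-X_0,-X_1)$.

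This is precisely the difficulty the paper's construction is engineered around. The paper takes $X_0$ to be a simply-connected minimal surface of general type with $\sigma(X_0)=1$ (a Chen surface), so that $X_0$ has a mod-$2$ basic class while a theorem of Kotschick forces $-X_0$ to have none; and it takes $X_1=(-\tilde X)\#\CP^2$ with $\tilde X$ symplectic of the right topological type, so that $X_1$ has no basic classes (it contains a $(+1)$-sphere) while $-X_1=\tilde X\#\overline{\CP}^2$ is symplectic and hence has one by Taubes. Thus the number of mod-$2$ basic classes differs for the pair in \emph{both} orientations simultaneously, which is exactly what item~\ref{itema} demands. Arranging this orientation asymmetry is the substantive input your proposal is missing.
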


We remark that the two items in the theorem do \emph{not imply} that  $(X_0,X_1)$ is not sequentially related by $\FamilyVanishing\bigcup \FamilyVanishingMir$.
The manifolds $X_0$ of Theorem~\ref{thm:DeltaZeroNotUniv} can be taken to be minimal complex surfaces of general type of signature one.

By showing that $(W_n, f_{W_n}) \in \FamilyVanishingMir$ for all $n$, we obtain the following corollary.
\begin{corollary}\label{thm1}	
	For any $n\geq 1$, the corks $(W_n, f_{W_n})$ and $(-W_n,f_{W_n})$ are not universal.
\end{corollary}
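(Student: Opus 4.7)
The plan is to deduce Corollary~\ref{thm1} from Theorem~\ref{thm:DeltaZeroNotUniv} via the single membership claim
\[ (W_n, f_{W_n}) \in \FamilyVanishingMir \quad \text{for every } n \geq 1. \]
By the definition of $\FamilyVanishingMir$, this is equivalent to $(-W_n, f_{W_n}) \in \FamilyVanishing$, i.e.\ to the vanishing of the difference element of the mirror cork in the Heegaard Floer homology of $-\partial W_n$. Granting this, the corollary is immediate: a single cork twist by $(W_n, f_{W_n})$ is a length-one sequence of twists with corks in $\FamilyVanishingMir$, while a single twist by $(-W_n, f_{W_n})$ is a length-one sequence in $\FamilyVanishing$. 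By Theorem~\ref{thm:DeltaZeroNotUniv}(\ref{itemb}) and (\ref{itema}) respectively, the exotic pair $(X_0, X_1)$ produced there is not related by either cork, so neither is universal.

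The substantive step is thus the vanishing of the difference element of $(-W_n, f_{W_n})$. The approach I would take starts from the standard Akbulut--Yasui Kirby diagram for $W_n$ from \cite{AkbulutYasui09}; after orientation reversal this yields a handle diagram for $-W_n$ which, via the Eliashberg--Gompf procedure, one expects to promote to a Legendrian/Stein presentation compatible with the involution $f_{W_n}$. One would then identify the contribution of this filling to the Floer-theoretic quantity defining the difference element (see Section~\ref{sec:DifferenceElement}) and compare it with the contribution coming from the mirror side. The cork involution $f_{W_n}$ is visible as a manifest symmetry of the Akbulut--Yasui diagram, and the goal is to leverage this symmetry, together with a suitable cobordism or handle-move argument, to show that the two contributions agree in $HF(-\partial W_n)$.

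The main obstacle is the last Floer-theoretic step: promoting a symmetry of the Kirby diagram to an equality in Heegaard Floer homology requires careful control of the induced cobordism maps. For $n = 1$ the well-documented symmetries of the Akbulut cork should allow a hands-on argument, but treating the whole family uniformly in $n$ demands a systematic use of the common plumbing structure underlying the $W_n$. A simpler alternative, if available, would be an outright vanishing of the relevant summand of $HF(-\partial W_n)$, which would trivialize the comparison and force the difference element to be zero for formal reasons.
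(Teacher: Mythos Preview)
Your reduction is exactly right and matches the paper: it suffices to show $(-W_n, f_{W_n})\in\FamilyVanishing$, after which Theorem~\ref{thm:DeltaZeroNotUniv} (\ref{itema}) and (\ref{itemb}) handle the two corks $(-W_n,f_{W_n})$ and $(W_n,f_{W_n})$ respectively.

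Where you diverge is in the substantive step. Your primary plan---promoting the diagrammatic symmetry of the Akbulut--Yasui handle picture to an equality of cobordism maps in Floer homology---is not what the paper does, and as you yourself note, controlling the induced maps uniformly in $n$ is delicate. The paper instead takes precisely the ``simpler alternative'' you mention at the end: it uses Proposition~\ref{prop:DeltaVanNoVar}\,\ref{prop:DeltaVanNoVara} to localize the difference element in $\HMred_{-1}(-\partial W_n;\F)$, and then appeals to explicit computations of this reduced Floer group in the literature (\cite{AkbulutDurusoy} for $n=1$, \cite{AkbulutKarakurtHFMazur} for $n=2,3$, and \cite{HalesThesis} in general) to see that it is supported entirely in \emph{even} gradings, hence vanishes in degree $-1$. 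No analysis of the involution is needed at all; the vanishing is forced by the grading. So your instinct in the last sentence was the right one---that route is available and is what the paper actually uses.
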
	
We will also see  that  Corollary~\ref{thm1} depends only on the cork boundary, indeed any cork with boundary $\pm \partial W_n$ is not universal (see Corollary~\ref{Cor:invNotRel}).

Theorem~\ref{thm:DeltaZeroNotUniv} shows a stark contrast between the $(W_n, f_{W_n})$-twist and the  generalized logarithmic transform, i.e. the operation consisting in removing an embedded $\mathbb{T}^2\times \mathbb D^2$ and gluing it back via a diffeomorphism of its boundary. Indeed,  any pair of  simply-connected, closed $4$-manifolds is related by \emph{a sequence} of generalized logarithmic transforms \cite[Cor. 11]{BaykurSunukjianRoundHandles}.
%We will also see  that  the involution on $ \partial W_n$ is not really relevant for Corollary~\ref{thm1}, indeed any cork of the form $(\pm W_n, f)$ is not universal (see Corollary~\ref{Cor:invNotRel}).

In the setting of manifolds with boundary it is still true that any exotic pair $(X_0,X_1)$ is related by a cork, provided that $\partial X_0$ is connected and $\pi_1(X_0) = 1$ \cite{boyer_1986}  (see also Theorem~\ref{Thm:corkDec} below).
Therefore it makes  sense to investigate the universal cork conjecture in this setting.

\begin{definition} We say that a  cork $(C,f)$  is $\partial$-universal if every exotic pair $(X_0,X_1)$ of simply-connected  $4$-manifolds with connected boundary, is related by $(C,f)$.
\end{definition}

Then we can formulate the following $\partial$-universal cork conjecture: \emph{there is no $\partial$-universal cork}.
In this case we are able to prove the conjecture in its entirety:
\begin{theorem}\label{thm2}
	There exists no $\partial$-universal cork.
\end{theorem}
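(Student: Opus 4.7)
The plan is to fix an arbitrary cork $(C,f)$ and exhibit an exotic pair $(X_0,X_1)$ of simply-connected $4$-manifolds with connected boundary that is not related by $(C,f)$; since $(C,f)$ was arbitrary, this shows no $\partial$-universal cork can exist. The mechanism would be an embedding obstruction: I want to choose $X_0$ so that no copy of $C$ can sit inside $\interior{X_0}$.

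First I would set up the obstruction formally. If an embedding $\iota\colon C\hookrightarrow\interior{X_0}$ witnessed the cork relation, setting $W:=X_0\setminus\interior{\iota(C)}$ gives a compact $4$-manifold with $\partial W=\partial X_0\sqcup(-\partial C)$ and a splitting $X_0=W\cup_{\partial C}C$. Applying Mayer--Vietoris, and using that $\partial C$ is an integer homology $3$-sphere together with the contractibility of $C$, pins down the homological type of $W$: when $X_0$ is a $\mathbb{Z}$-homology ball, $W$ is forced to be an integral homology cobordism from $\partial X_0$ to $\partial C$, and more generally the intersection form of $W$ must coincide with that of $X_0$. This will be the formal source of the contradiction.

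Next I would produce the exotic pair. The goal is an exotic pair of simply-connected (preferably contractible) $4$-manifolds with common boundary $\Sigma:=\partial X_0$ chosen to be \emph{incompatible} with $Y:=\partial C$ in the sense dictated above. The raw material comes from the existing catalog of small exotic $4$-manifolds coming, for instance, from the Akbulut--Yasui family and Mazur-type constructions; one varies the building blocks (e.g.\ Brieskorn spheres with tunable Heegaard Floer $d$-invariants) to steer $\Sigma$ away from $Y$. Concretely, I would pick $\Sigma$ either in a different $\mathbb{Z}$- or $\mathbb{Q}$-homology cobordism class from $Y$ or, when $Y$ is $\mathbb{Z}$-trivial (as is the case for all cork boundaries), so that $|d(\Sigma)-d(Y)|$ exceeds the $d$-invariant bound imposed on an embedded cobordism $W$ with intersection form $Q_{X_0}$. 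With such $(X_0,X_1)$ in hand, the obstruction from the first step rules out any $(C,f)$-relation and completes the proof.

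The main obstacle is precisely this second step: for a completely arbitrary cork boundary $Y$ one needs enough freedom in the small-exotic-pair construction to land outside the constraints imposed by $Y$. I expect this to be handled either by a uniform construction exploiting the plumbing flexibility of Brieskorn spheres or by a modest case analysis, with Heegaard Floer $d$-invariants as the main bookkeeping tool. The other two steps are, by contrast, essentially formal.
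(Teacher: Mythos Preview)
Your approach is genuinely different from the paper's, but the concrete obstruction you propose cannot work as stated. The boundary $Y=\partial C$ of any cork bounds an integral homology $4$-ball (indeed a contractible one), so its Heegaard Floer $d$-invariant vanishes: $d(Y)=0$. If you take $X_0$ contractible (or even a $\mathbb{Z}$-homology ball) as you suggest, then $\Sigma=\partial X_0$ likewise has $d(\Sigma)=0$, and the quantity $|d(\Sigma)-d(Y)|$ you want to make large is identically zero. Passing to non-contractible $X_0$ does not help in any evident way: the $d$-invariant shift coming from a definite cobordism $W$ with form $Q_{X_0}$ is a fixed quantity depending on $Q_{X_0}$, not something you can push past by varying $Y$, and in any case you have offered no construction of exotic simply-connected pairs with prescribed boundary and prescribed intersection form. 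The step you flag as ``the main obstacle'' is in fact the entire content of the argument, and neither the $d$-invariant nor ``plumbing flexibility of Brieskorn spheres'' resolves it; you would need a homology-cobordism invariant that (a) takes infinitely many values on homology spheres bounding contractible $4$-manifolds and (b) can be realized by boundaries of \emph{absolutely exotic} simply-connected pairs, together with an actual construction. None of this is supplied.

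For comparison, the paper avoids embedding obstructions entirely. It introduces a numerical complexity $\mathfrak{C}$ for $h$-cobordisms, corks, and exotic pairs, shows (Lemma~\ref{lem:CorkRelExoticPair}) that a cork $(C,f)$ relating $(X_0,X_1)$ forces $\mathfrak{C}(X_0,X_1)\le\mathfrak{C}(C,f)$, and then produces exotic pairs of unbounded complexity. The unboundedness comes from Morgan--Szab\'o's inertial $h$-cobordisms of diverging complexity; applying the cork decomposition theorem yields corks $(C_n,f_n)$ with $\mathfrak{C}(C_n,f_n)\to\infty$, and the Akbulut--Ruberman construction upgrades these to absolutely exotic simply-connected pairs $(X_{n,0},X_{n,1})$ with $\mathfrak{C}(X_{n,0},X_{n,1})=\mathfrak{C}(C_n,f_n)$ (Proposition~\ref{prop:cplCorkAK}). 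Any fixed cork has finite complexity and hence fails to relate $(X_{n,0},X_{n,1})$ for $n$ large.
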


The proofs of Theorem~\ref{thm:DeltaZeroNotUniv} and Theorem~\ref{thm2} are rather different.
In the former case  we infer some results about how cork twists by $W_n$ affect the mod-$2$ Seiberg-Witten invariants  and then we look for exotic pairs with a large enough variation of these invariants.
On the other hand, the proof of Theorem~\ref{thm2} is based  on finding a sequence of exotic pairs of diverging complexity (Definition~\ref{def:complexityPair}), this sequence is obtained by applying the Akbulut-Ruberman construction \cite{AkbulutRubermanAbsolutelyExotic} to the corks obtained by applying the cork decomposition theorem \cite{CHFS, Matveyev} to the inertial cobordisms of diverging complexity of \cite{MorganSzabo99}.

\paragraph{Related results.} \renewcommand{\cX}{\mathcal{X}}
We want to clarify the difference between our result and other results about smooth structures not related by corks.  
Let $G$ be a group. A $G$-cork \cite[Def. 1.6]{TangeNonExtInfCork} is a contractible $4$-manifold $C$ with a $G$-action   $\alpha: G\to \mathrm{Diff}^+(\partial C)$ such that for any  $g\in G$, the pair $(C,\alpha(g))$ is a cork (here we do not require $\alpha(g)$ to be an involution). Clearly this definition specializes to our previous one when $G=\Z/2\Z$.

Tange \cite[Thm. 1.4]{TangeNonExtInfCork} proved that there exists an infinite family of  closed, simply-connected $4$-manifolds $\cX = \{X_n\}_{n\in N}$ all homeomorphic but pairwise not diffeomorphic, such that for any group $G$ and any $G$-cork $(C,\alpha)$,
there does not exist an embedding $\emb: C\hookrightarrow X_0$ such that $\cX$ is a subset (up to diffeomorphism) of the set of manifolds
\begin{equation*}
	\big\{ X_0\setminus \interior{\emb(C)} \bigcup_{\emb\circ \alpha(g)} C \  \big | \ g \in G \big\},
\end{equation*}
i.e. $\cX$ is \emph{not} generated by the $G$-cork $(C,\alpha)$.
It is necessary  for the family $\cX$ to be infinite, indeed Melvin and Schwartz  \cite{MelvinSchwartz}  proved that if  a family $\cX$ as above consists of $p\in \N$ elements, then it can be generated by embedding a $\Z/p\Z$-cork. \cite{MelvinSchwartz}  can be thought as a generalization
of the cork decomposition theorem \cite{CHFS,Matveyev} for $\Z/p\Z$-corks.

Tange's result has then been  generalized by Yasui \cite[Thm. 1.3]{YasuiNonExtSurgeries} 	who showed that for any $n\in \N$ there is a closed simply-connected $4$-manifold $Q_n$ such that for any (not necessarily connected) codimension-$0$ submanifold $W\subset Q_n$
with $b_1(\partial W)< n$ exists an exotic pair $(Q_n, \tilde Q_n)$ such that $\tilde Q_n$ is not diffeomorphic to any of the manifolds obtained  by cutting out $W$ from $Q_n$ and gluing it back with a diffeomorphism of $\partial W$ (the gluing map can be anything). 
Both the authors  obtained analogous results in the setting of manifolds with boundary  \cite{TangeNonExtInfCork, YasuiNonExtSurgeries}.

Tange's and Yasui's  results do not imply ours  because they fix the embedding, thus, in principle, it is still possible 
to relate every exotic pair $(Q_n, \tilde Q_n)$ by embedding the same $\Z/2\Z$-cork $(C,f)$ in different ways.

Regarding the $\partial$-universal cork conjecture, it follows from \cite{KangOneStabContractible} that 
any cork arising from an $h$-cobordism with an handle decomposition with only one $2$-handle and one $3$-handle is not $\partial$-universal (in the language of \cite{Ladu}, these are the corks admitting a supporting protocork of sphere-number one). In particular,  \cite{KangOneStabContractible}  implies that the corks $(W_n, f_{W_n})$ are not $\partial$-universal.

\paragraph{Structure of the paper.	} In Section~\ref{sec:Conventions} we set up our notation and conventions and we review the definition of cork and of cork relating and exotic pair. In Section~\ref{sec:proofThm1} we prove  Theorem~\ref{thm:DeltaZeroNotUniv} and Corollary~\ref{thm1}, in  Section~\ref{sec:proofThm2}
we prove Theorem~\ref{thm2}.

\begin{acknowledgements}  The author would like to thank Arunima Ray, Daniel Ruberman and  Steven Sivek for their insightful comments and for reviewing a  draft of this paper, Dieter Kotschick for useful e-mail exchange and an anonymous referee for useful comments on a previous version of this paper.
The author is grateful to the Max Planck Institute for Mathematics in Bonn for hospitality and financial support.
\end{acknowledgements}

\section{Basic definitions, conventions.}\label{sec:Conventions}
\subsection{Conventions and basic definitions.}
	In this paper all the manifolds will be compact, \emph{oriented} and smooth.
	Homeomorphisms, diffeomorphisms and codimension-$0$ embeddings are \emph{always ment to preserve orientation, 
	even if we do not write it explicitely}, but sometimes we will remark it for extra clearness.	
	We will adopt the notation $\interior{X} = X \setminus \partial X$ to  denote the interior of a manifold.
	 The symbols $\sim$, $\approx$ and  $\cong$  will denote respectively  the relations of homotopy equivalence,  (orientation preserving) homeomorphism and (orientation preserving) diffeomorphism.
	 Given two $n$-manifolds   $N^n,M^n$ and a diffeomorphism $f$ from some components of $\partial M$ 
	to  $\partial N$, we denote by  $N\bigcup_f M$ the manifold obtained by gluing  $M$ to $N$ using $f$.

	We recall the definition of cork \cite{AkbulutBook}, some authors require corks to be Stein, but we will not need this extra structure.
	\begin{definition}\label{def:cork} An (abstract) \emph{ cork} is a pair $(W,\tau)$ where $W$ is a compact, \emph{contractible}, oriented, smooth $4$-manifold  with $\partial W\neq \emptyset$ and $\tau:\partial W \to \partial W$ 
is an (orientation preserving) involution such that  $\tau$  does \emph{not} extend to an (orientation preserving) diffeomorphism of $W$.
	\end{definition}
	Notice that by \cite{Freedman, boyer_1986} the involution of a cork will always extend to an homeomorphism of the full manifold.
	
	\begin{definition}\label{Def:related}
		A pair of $4$-manifolds $(X_0,X_1)$ (not necessarily closed)  is \emph{related by a cork} $(C,f)$ if exists an embedding $\emb: C\to X_0$ and a diffeomorphism
		\begin{equation*}
			F:  X_0\setminus \interior{\emb(C)} \bigcup_{\emb\circ f} C \to  X_1.	
		\end{equation*} 
	\end{definition}
	We will refer to  the operation which associates to $X_0$  the manifold $	X_0\setminus \interior{\emb(C)} \bigcup_{\emb\circ f} C $ as   \emph{cork twist} by $(C,f)$ or just as \emph{$(C,f)$-twist}.	Notice that in the above definition we are not requiring anything about the behaviour of $F$ on the boundary of $X_0$.
	We remark that, following our conventions above, the embedding $\emb$ and the diffeomorphism $F$ need to be orientation preserving.
	
	\begin{remark}
		The choice of the embedding is crucial, for example Akbulut and Yasui \cite{AkbulutYasuiKnottingCorks}  produced infinitely many exotic copies of a closed, simply connected $4$-manifold $X$ which are not pairwise diffeomorphic by embedding the same cork in  $X$ in different ways.
		In particular, for  a cork $(C,f)$  to not relate an exotic pair $(X_0, X_1)$, is a property which encompasses
		all the embeddings of  $C$ in $X_0$.
	\end{remark}

	\begin{definition}\label{Def:sequentiallyRelated}  Let $\mathcal{S}$ be a set of corks.
	A pair of $4$-manifolds $(X_0,X_1)$ (not necessarily closed)  is \emph{sequentially related  by $\mathcal S$},
		if there exists a sequence of $4$-manifolds $M_0, M_1,\dots, M_m$, $m\geq 0$, with $M_0\cong X_0$ and $M_m \cong X_1$,
		such that, for any $i=0,\dots, m-1$,  $(M_i, M_{i+1})$ is related by some cork in $\mathcal{S}$.
	\end{definition} 
 
 It is well known that any  exotic pair $(X_0, X_1)$ of simply-connected, \emph{closed}
$4$-manifolds is related by some cork  \cite{CHFS, Matveyev}. The analogous statement for manifolds with boundary follows from \cite{boyer_1986} and is less known, hence we include it here together with a proof sketch.

	\begin{theorem}[Boyer \cite{boyer_1986}, Curtis-Freedman-Hsiang-Stong \cite{CHFS}, Matveyev \cite{Matveyev}]\label{Thm:corkDec} Let $V_1, V_2$ be  simply-connected $4$-manifolds with connected boundary and let $F:V_1\to V_2$ be an homeomorphism extending the diffeomorphism $f:\partial V_1\to \partial V_2$.
	Then $V_1$ and $V_2$ are $h$-cobordant relative to $f$ and, if  $V_1\not \cong V_2$, there is a cork relating $(V_1, V_2)$.
\end{theorem}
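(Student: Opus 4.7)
The plan is to split the statement into two parts and address them in order: first produce an $h$-cobordism between $V_1$ and $V_2$ rel boundary using Boyer's theorem, then extract a cork from this $h$-cobordism by adapting the closed-case decomposition of Curtis-Freedman-Hsiang-Stong and Matveyev.

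For the $h$-cobordism statement I would simply invoke \cite{boyer_1986}: its hypotheses are precisely that $V_1$ and $V_2$ are simply-connected $4$-manifolds with connected boundary and that $F\colon V_1\to V_2$ is a homeomorphism restricting to a diffeomorphism $f$ on the boundary. Its conclusion furnishes a smooth rel-boundary $h$-cobordism $W$ from $V_1$ to $V_2$ whose lateral face $\partial V_1\times[0,1]$ is glued to the top $V_2$-side via $f$.

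For the cork statement I would view $W$ as a simply-connected cobordism with a genuine product collar $\partial V_1\times[0,1]$ along its sides, equip it with a handle decomposition built on a collar of $V_1$ so that all handles live in $\interior{W}$, and use Wall's handle-trading lemma to cancel $1$- and $4$-handles. At this point the arguments of \cite{CHFS} and \cite{Matveyev} can be run verbatim in the interior: they produce a contractible codimension-$0$ submanifold $C\subset\interior{V_1}$ together with an involution $\tau$ of $\partial C$ such that the $(C,\tau)$-twist of $V_1$ is diffeomorphic to $V_2$ via a map which, being the identity on the collar, agrees with $f$ on the common boundary. Finally, if $V_1\not\cong V_2$, the involution $\tau$ cannot extend over $C$ (otherwise the twist would be diffeomorphic to $V_1$), so $(C,\tau)$ is a bona fide cork in the sense of Definition~\ref{def:cork} and relates $(V_1,V_2)$ in the sense of Definition~\ref{Def:related}.

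The main thing to check is that Matveyev's construction, stated in the literature for closed manifolds, transfers cleanly to the bounded setting. This turns out to be essentially costless once Boyer's rel-boundary $h$-cobordism is in hand: every algebraic cancellation is realized geometrically by local modifications supported in $\interior{W}$, so the product collar along $\partial V_1\times[0,1]$ is never touched and the boundary identification $f$ is automatically preserved throughout.
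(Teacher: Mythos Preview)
Your proposal is correct and follows essentially the same route as the paper: invoke Boyer's rel-boundary $h$-cobordism and then run the Curtis--Freedman--Hsiang--Stong/Matveyev argument in the interior, using $V_1\not\cong V_2$ to ensure the involution does not extend. The only difference is that the paper, noting that the bounded case is ``less known,'' reviews Boyer's construction of the $h$-cobordism in some detail (doubling $V_1\cup_f -V_2$, identifying it with a connected sum of $\SS^2\times\SS^2$'s or $\CP^2\#\overline{\CP}^2$'s via Freedman, and capping off a Wall $h$-cobordism with a suitably reglued disk bundle so that the Mayer--Vietoris kernel matches the doubling subgroup), whereas you cite it as a black box.
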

\begin{proof}[Proof sketch] The $h$-cobordism $V_1\to V_2$ of the thesis is constructed in \cite[Prop. 4.2]{boyer_1986}.  We review its construction.

			Glue $V_1$ to $V_2$ using $f$ obtaining  $V:= V_1\bigcup_f -V_2$.	
			Since $f$ extends to an homeomorphism, $V$ is homeomorphic to $V\approx V_1\bigcup_{id} -V_1$, the double of $V_1$, and the latter is homeomorphic to  $S$, with $S =\#_{i=1}^{b_2(V_1)} \SS^2\times \SS^2$ if $V$ is spin and   $S=\#_{i=1}^{b_2(V_1)}  \CP^2\#\overline{\CP}^2 $ otherwise \cite{Freedman}.
			
	The homeomorphism $F$ defines a ``doubling homomorphism" $H_2(V_1, \partial V_1)\to H_2(V)$, by associating to a relative cycle $\Sigma \in Z_2(V_1,\partial V_1)$ the absolute cycle  $\Sigma-F(\Sigma) \in Z_2(V)$  obtained by gluing $\Sigma$ to $F(\Sigma)\in Z_2(V_2, \partial V_2)$.
	Let $J\subset H_2(V)$ be the image of the doubling homomorphism. This is a maximal isotropic subgroup of $H_2(V)$  and $\partial(J) = H_1(\partial V_1) $ 	where $\partial: H_2(V) \to H_1(\partial V_1)$ is the connecting morphism of the Mayer-Vietoris exact sequence for  $V = V_1\cup_f V_2$. 
	
	By \cite{Wall} there exists an $h$-cobordism $Z:V\to S$.  The manifold $S $  bounds $E$,  a (possibly  twisted) $\D^3$ bundle over $\SS^2$. Glue  $E$ to $Z$ along $S$ obtaining $Z' = Z \bigcup_S E$, a null-cobordism of $V$, with $Z'\sim \bigvee^{b_2(V)}_{i=1} \SS^2$.
	Since $J$ is maximal isotropic, it is possible to  reglue $E$ to $Z$,  using a suitable diffeomorphism  $\varphi \in \mathrm{Diff}^+(S)$, obtaining $Z'' = Z \bigcup_\varphi E$, 	 in such a way that $\ker (H_2(V)\to H_2(Z'')) = J$ and $Z''\sim \bigvee^{b_2(V)}_{i=1} \SS^2$ \cite[pg 145]{Wall}.
	
	To show that $Z'':V_1\to V_2$ is an $h$-cobordism  relative to $f$, now it is enough to show that $H_2(V_i)\to H_2(Z'')$, $i=1,2$, is surjective (the injectivity will follow from the equality of the ranks) 	which is proved by the commutative diagram shown in \cite[pg. 348]{boyer_1986}.

	Now, since $\pi_1 (V_i) = 1$, $i=1,2$, we can repeat the proof of the cork decomposition theorem \cite{CHFS,Matveyev} starting with the $h$-cobordism $Z''$,   this will yield 	a cork  inducing $Z''$ hence relating $(V_1, V_2)$.
	The assumption of $V_1\not \cong V_2$, is used to ensure that the involution of the cork does not extend to a diffeomorphism.
\end{proof}

\section{Proof of Theorem~\ref{thm:DeltaZeroNotUniv}.}\label{sec:proofThm1}

\subsection{The difference element of corks.}\label{sec:DifferenceElement}
We will make use of monopole Floer homology, as developed in \cite{KM}. In the following $\F$ will denotes the field of two elements.
Recall that for any cobordism $X:Y_0\to Y_1$ between closed, oriented $3$-manifolds, there is an associated cobordism map $\HMfrom(X;\F): \HMfrom(Y_0;\F)\to \HMfrom(Y_1;\F)$ which is an $\F[U]$-homomorphism~\cite[Sec. 3.4]{KM}.

Let $(C,f)$ be a cork. Remove a ball from $C$ and interpret the resulting manifold $ C\setminus \ball$ as  a cobordism $\SS^3\to \partial C$.
The mod-$2$ relative Seiberg-Witten invariant of $C$ is an element of the Floer homology of $\partial C$ with coefficients in $\F$ and is  defined as 
\begin{equation*}
	\HMfrom(C\setminus \ball; \F)(\hat 1)  \in \HMfrom(\partial C;\F),
\end{equation*}
where $\hat 1\in \HMfrom(\SS^3;\F)$ is the canonical generator.

In \cite{Ladu} we studied the difference element of protocorks and corks using integer coefficients, here we will only need its mod-$2$ reduction.\begin{definition} Given a cork $(C,f)$ let $x\in \HMfrom(\partial C;\F) $ be the mod-$2$ relative Seiberg-Witten invariant of $C$.
Then we define the \emph{difference element} of $(C,f)$ to be 
	\begin{equation*}
		\Delta = x - f_* x \in \HMfrom(\partial C; \F),
	\end{equation*}
	where $f_*:\HMfrom(\partial C; \F)\to\HMfrom(\partial C; \F)$ is the involution  induced by $f$.
\end{definition}

Recall that if $Y$ is an integer  homology sphere, then $\HMfrom(Y;\F)$ is an absolutely $\Q$-graded $\F[U]$-module where multiplication by $U$
is a degree $-2$ homomorphism. Furthermore there is a splitting $\HMfrom(Y;\F)\simeq \hat \cT_{-2h-1} \oplus \HMred(Y;\F)$
where $h \in \Z$, is the Fr{\o}yshov invariant of $Y$ \cite[Sec. 39]{KM},  
 $\hat \cT_{-2h-1}$ is isomorphic to the graded $\F[U]$-module $\F[[U]]$ with $1\in \F[[U]]$ being of degree $-2h-1$, and 
$ \HMred(Y;\F)$ is the \emph{reduced homology} which is the  $U$-torsion submodule of~$\HMfrom(Y;\F)$. 

The next proposition is essentially  \cite[Cor. 1.3]{Ladu}. The fact that  $-W_1$ cannot change the Seiberg-Witten invariants (even with $\Z$-coefficients) was first noticed in \cite[Thm. 8.6]{LinRubermanSaveliev2018} by studing the action of the involution in Floer homology. 

\begin{proposition}\label{prop:DeltaVanNoVar} Let  $(C,f)$ be a cork and let $\Delta$ be its difference element, then:
\begin{enumerate}
	\item \label{prop:DeltaVanNoVara} $\Delta$ belongs to $\HMred_{-1}(\partial C; \F)$.
	\item  \label{prop:DeltaVanNoVarb} if $\Delta = 0$, and $(X_0, X_1)$ is a pair of homeomorphic, closed, oriented, simply-connected $4$-manifolds  related by the cork $(C,f)$ with $b^+(X_0)\geq 2$, then there exists an isometry $\beta: H^2(X_0)\to H^2(X_1)$  such that $SW(X_0,  K) \equiv SW(X_1, \beta(K)) \mod 2$ for any characteristic element $K\in H_2(X_0)$.
\end{enumerate} 
\end{proposition}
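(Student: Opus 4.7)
The plan is to handle the two parts via the structure of $\HMfrom(\partial C;\F)$ recalled above.

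For item (\ref{prop:DeltaVanNoVara}) I first pin down the grading of $x$. Since $C$ is contractible it is a rational homology ball, so Fr{\o}yshov's inequality applied to both $C$ and $-C$ forces $h(\partial C)=0$; thus $\HMfrom(\partial C;\F)\simeq \hat \cT_{-1}\oplus \HMred(\partial C;\F)$. Because $H^2(C)=0$ there is a unique spin$^c$ structure on $C\setminus\ball$, with $c_1^2=0$, and $\chi(C\setminus\ball)=0=\sigma(C\setminus\ball)$; the standard degree shift formula for the $\HMfrom$ cobordism map is then zero, so $x\in\HMfrom_{-1}(\partial C;\F)$ since $\hat 1\in \HMfrom_{-1}(\SS^3;\F)$. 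The involution $f_*$ is a grading preserving $\F[U]$-module automorphism respecting the splitting, and on $\hat \cT_{-1}\simeq \F$ every $\F$-linear automorphism is the identity; hence $\Delta$ has zero projection onto $\hat \cT_{-1}$, and therefore lies in $\HMred_{-1}(\partial C;\F)$.

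For item (\ref{prop:DeltaVanNoVarb}) set $Z := X_0\setminus\interior{\emb(C)}$, so that $X_0\cong Z\cup_{\emb}C$ and $X_1\cong Z\cup_{\emb\circ f}C$. Since $\partial C$ is a homology sphere and $C$ is contractible, Mayer--Vietoris produces canonical isomorphisms $H^2(X_i)\xrightarrow{\simeq}H^2(Z)$; define $\beta:H^2(X_0)\to H^2(X_1)$ as their composition. Because the intersection form of each $X_i$ is computed entirely on the $Z$--part, $\beta$ is an isometry, and via Poincar\'e duality it identifies characteristic elements on the two sides and the induced affine bijection of spin$^c$ structures. Fix a spin$^c$ structure $\mathfrak{s}_0$ on $X_0$ and let $\mathfrak{s}_1$ be the one on $X_1$ corresponding via $\beta$: both restrict to the same spin$^c$ structure $\mathfrak{t}$ on $Z$ and to the unique spin$^c$ structure on $C$.

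The cork twist only alters how $C$ is attached to $Z$, namely by precomposing the gluing diffeomorphism with $f$. Because $b^+(X_0)\geq 2$, the mod-$2$ reduction of the Floer pairing formula for SW invariants across the integer homology sphere $\partial C$ (in the form used in \cite{Ladu}) gives
\begin{equation*}
	SW(X_0,\mathfrak{s}_0)\equiv \langle \eta_Z,\, x\rangle\pmod 2, \qquad SW(X_1,\mathfrak{s}_1)\equiv \langle \eta_Z,\, f_* x\rangle\pmod 2,
\end{equation*}
where $\eta_Z$ is a single class depending only on the unchanged data $(Z,\mathfrak{t})$. Subtracting and using $\Delta = x - f_* x = 0$ yields $SW(X_0,\mathfrak{s}_0)\equiv SW(X_1,\mathfrak{s}_1)\pmod 2$; letting $\mathfrak{s}_0$ range over all spin$^c$ structures completes the proof. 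The main obstacle I expect is invoking the Floer pairing formula correctly with $\F$-coefficients while tracking the conventions from \cite{Ladu}; the grading/tower analysis and the Mayer--Vietoris step are routine once this is in hand, and the hypothesis $b^+\geq 2$ is precisely what guarantees that the tower part of $\HMfrom(\partial C;\F)$ plays no role in the comparison, so that only $\HMred$ contributes to the pairing.
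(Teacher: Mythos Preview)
Your proof is correct and follows essentially the same route as the paper, which simply cites \cite[Cor.~1.3]{Ladu} for item~(\ref{prop:DeltaVanNoVara}) and \cite[Prop.~3.6.1]{KM} for item~(\ref{prop:DeltaVanNoVarb}); you have effectively unpacked those citations. One small imprecision: asserting that $f_*$ \emph{respects the splitting} $\hat\cT_{-1}\oplus\HMred_{-1}$ is not quite justified (an $\F[U]$-automorphism must preserve the $U$-torsion submodule $\HMred$, but need not preserve a chosen tower complement); the clean fix is to pass to the quotient $\HMfrom_{-1}/\HMred_{-1}\cong\F$, on which $f_*$ is forced to be the identity, and conclude $x-f_*x\in\HMred_{-1}$ from there.
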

Informally, the second item tells us that  if the difference element vanishes then cork twists by $(C,f)$ cannot change the Seiberg-Witten invariants.
\begin{proof}
 Since we are using $\F$-coefficients, Corollary 1.3 of \cite{Ladu}   implies that $\Delta \in \HMred_*(Y;\F)$. The fact that the 
 the degree  of $\Delta$ is $-1$  follows from the fact that
 $f_*$ preserves the grading and the relative Seiberg-Witten invariant of a cork lives in degree $-1$ because
$\hat 1 \in \widehat{HM}_{-1}(\SS^3)$ and the cobordism map $\HMfrom(C\setminus \ball)	$ has degree equal to zero
because $C$ is contractible \cite[Eq. (28.3)]{KM}.

The second part is an application of  \cite[Prop. 3.6.1]{KM} (see also the proof of Corollary 1.2 in \cite{Ladu}).
\end{proof}

\subsection{Proof of Theorem~\ref{thm:DeltaZeroNotUniv} and Corollary~\ref{thm1}}

We begin with the following observation.   
\begin{lemma} \label{lemma:CMinusCrelates} If $(C,f)$ relates the  pair $(X_0, X_1) $ then $(-C,f)$ relates $(-X_0, -X_1)$.
\end{lemma}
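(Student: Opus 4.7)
The idea is to apply the orientation-reversal functor to every piece of data witnessing the relation: both the cork and the ambient manifolds, and then verify that each clause of Definition~\ref{Def:related} is preserved. There is no deep content here, only orientation-convention bookkeeping.

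First I would observe that $(-C,f)$ is again a cork: contractibility is orientation-independent, and an orientation-preserving involution $f:\partial C\to \partial C$ is equally an orientation-preserving involution $-\partial C\to -\partial C$, since reversing orientations on both source and target cancels. Moreover an orientation-preserving diffeomorphism of $-C$ is the same underlying data as one of $C$, so the non-extension property is preserved.

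Next, let $\emb:C\hookrightarrow X_0$ and $F: X_0\setminus \interior{\emb(C)}\bigcup_{\emb\circ f} C \to X_1$ be the embedding and diffeomorphism witnessing that $(C,f)$ relates $(X_0,X_1)$. The same underlying map defines an orientation-preserving embedding $-C\hookrightarrow -X_0$, and $F$ likewise defines an orientation-preserving diffeomorphism once both domain and codomain have their orientations reversed.

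The only remaining point is that the cork-twist construction commutes with orientation reversal. As oriented manifolds, $-X_0\setminus \interior{\emb(-C)} = -(X_0\setminus \interior{\emb(C)})$, and the gluing operation $A\bigcup_\phi B$ along a boundary diffeomorphism is local near the identification, so it satisfies $(-A)\bigcup_\phi (-B) \cong -(A\bigcup_\phi B)$ (the map $\phi$ continues to reverse the induced boundary orientations after we reverse the bulk orientations of both pieces). Combining these identifications gives
\[
(-X_0)\setminus \interior{\emb(-C)}\bigcup_{\emb\circ f}(-C) \;\cong\; -\bigl(X_0\setminus \interior{\emb(C)}\bigcup_{\emb\circ f} C\bigr) \;\cong\; -X_1,
\]
which is exactly the data required to conclude that $(-C,f)$ relates $(-X_0,-X_1)$. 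The main (mild) subtlety is confirming that the gluing convention respects orientation reversal uniformly; once that is granted the proof is immediate.
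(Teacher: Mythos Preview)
Your proof is correct and follows exactly the same idea as the paper's: reverse the orientation on every piece of the defining data and observe that the resulting embedding and diffeomorphism witness the relation for $(-C,f)$ and $(-X_0,-X_1)$. You simply spell out the orientation bookkeeping (and the fact that $(-C,f)$ is again a cork) more carefully than the paper does.
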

\begin{proof}
By definition, there exists an (orientation preserving) embedding $\emb:C\to X_0$
and a diffeomorphism $F: X_0\setminus \interior{\emb(C)}\bigcup_{\emb\circ f} C \to X_1.$

By reversing the orientation on both sides, $F$ gives also a diffeomorphism 
\begin{equation*}
	F: (-X_0)\setminus \interior{\emb(-C)}\bigcup_{\emb\circ f} -C \to -X_1,
\end{equation*}
which implies that $(-X_0,-X_1)$ is related by the cork $(-C,f)$.	
\end{proof}

We are now ready to prove Theorem~\ref{thm:DeltaZeroNotUniv} which we restate below for the reader's convenience.
\begin{definition}	We define the following sets of corks
	\begin{spliteq*}
		&\FamilyVanishing  = \{(C,f) \ \text{cork} \ | \ \Delta_{(C,f)}= 0 \},  \\
		& \FamilyVanishingMir = \{(C,f)\  \text{cork} \ | \ (-C,f) \in \FamilyVanishing\},  \\
	\end{spliteq*}
where $\Delta_{(C,f)}$ is the difference element of $(C,f)$.
\end{definition}

\begingroup
\def\thetheorem{\ref{thm:DeltaZeroNotUniv}}
\begin{theorem}
There exist infinitely many exotic pairs $(X_0,X_1)$ of simply-connected, closed $4$-manifolds,
such that  \begin{enumerate}
					\item \label{itema} $(X_0,X_1)$ and $(-X_0, -X_1)$ are not  sequentially related  by $\FamilyVanishing$, 
					\item \label{itemb} $(X_0,X_1)$ and $(-X_0, -X_1)$ are not  sequentially related  by $\FamilyVanishingMir$.
				\end{enumerate}
\end{theorem}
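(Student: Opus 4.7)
The plan is to reduce the theorem to a single statement via the orientation-reversal symmetry from Lemma~\ref{lemma:CMinusCrelates}, and then to exhibit exotic pairs obstructed by the mod-$2$ Seiberg-Witten criterion of Proposition~\ref{prop:DeltaVanNoVar}(\ref{prop:DeltaVanNoVarb}) applied in both orientations. First, a sequence $X_0 = M_0, M_1, \ldots, M_m = X_1$ of cork twists by corks $(C_i, f_i) \in \FamilyVanishingMir$ corresponds, by Lemma~\ref{lemma:CMinusCrelates} and by reversing orientations throughout, to a sequence $-X_0, -M_1, \ldots, -X_1$ of cork twists by $(-C_i, f_i) \in \FamilyVanishing$, and this correspondence is bijective. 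Hence $(X_0, X_1)$ is sequentially related by $\FamilyVanishingMir$ if and only if $(-X_0, -X_1)$ is sequentially related by $\FamilyVanishing$, so items~\ref{itema} and~\ref{itemb} of the theorem become equivalent; it suffices to exhibit infinitely many exotic pairs $(X_0, X_1)$ with $b^+(X_0) \geq 2$ and $b^-(X_0) \geq 2$ for which neither $(X_0, X_1)$ nor $(-X_0, -X_1)$ is sequentially related by $\FamilyVanishing$.

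To obstruct such a sequential relation I apply Proposition~\ref{prop:DeltaVanNoVar}(\ref{prop:DeltaVanNoVarb}) inductively along any candidate sequence: composing the isometries produced at each step gives a lattice isometry $\beta : H^2(X_0) \to H^2(X_1)$ with $SW(X_0, K) \equiv SW(X_1, \beta K) \pmod 2$ for every characteristic $K$, and similarly for $(-X_0, -X_1)$ since $b^+(-X_0) = b^-(X_0) \geq 2$. A convenient isometry invariant that witnesses failure of such a matching is the cardinality of the mod-$2$ support of the Seiberg-Witten function on $H^2$.

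For the construction, I take $X_0$ a minimal complex surface of general type with $\sigma(X_0) = 1$ and both $b^+(X_0), b^-(X_0) \geq 2$ (hence $K_{X_0}^2 \geq 17$, readily realized) containing a homologically essential square-zero embedded torus $T$, and set $X_1^{(n)} = (X_0)_{K_n}$ by Fintushel-Stern knot surgery along $T$ with $K_n$ the $(2, 2n+1)$-torus knot, whose Alexander polynomial $\Delta_{K_n}(t) = \sum_{i=0}^{2n} (-1)^i t^i$ has all $2n+1$ coefficients odd. Witten's theorem gives a mod-$2$ SW support $\{\pm K_{X_0}\}$ of cardinality $2$ for $X_0$, while the Fintushel-Stern formula $SW_{X_1^{(n)}} = SW_{X_0} \cdot \Delta_{K_n}(t^2)$ yields support of cardinality $2(2n+1)$ for $X_1^{(n)}$. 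These cardinalities diverge in $n$, so no isometry of $H^2$ can match the mod-$2$ SW functions, which settles the first half of the reduced problem.

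The main obstacle is the orientation-reversed half. The Fintushel-Stern formula applied on $-X_0$ reads $SW_{-X_1^{(n)}} = SW_{-X_0} \cdot \Delta_{K_n}(t^2)$, so the same parity count succeeds as soon as $SW_{-X_0}$ has non-empty mod-$2$ support. However, for $X_0$ K\"ahler of general type the typical expectation is $SW_{-X_0} \equiv 0$, in which case $SW_{-X_1^{(n)}}$ vanishes and the supports are trivially matched. I would circumvent this by producing the base $X_0$ itself as a Fintushel-Stern knot surgery $Y_{K'}$ on a simply-connected symplectic template $Y$ chosen so that both $SW_Y$ and $SW_{-Y}$ are non-vanishing; then $SW_{-X_0} = SW_{-Y} \cdot \Delta_{K'}(t^2)$ has detectable mod-$2$ support, and the auxiliary knot $K'$ together with the template can be arranged so that $X_0$ remains minimal of general type and of signature one. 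With such an $X_0$ fixed, the $(2,2n+1)$-torus knot family from the previous paragraph produces infinitely many pairs with mismatched mod-$2$ SW in both orientations, and the reduction of the first paragraph promotes the conclusion to both items of the theorem.
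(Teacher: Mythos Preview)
Your reduction of item~\ref{itemb} to item~\ref{itema} via Lemma~\ref{lemma:CMinusCrelates}, and your use of Proposition~\ref{prop:DeltaVanNoVar}(\ref{prop:DeltaVanNoVarb}) inductively along a candidate sequence to compare the cardinalities of the mod-$2$ basic-class sets, are exactly what the paper does. The divergence --- and the gap --- is in the construction of the pairs.

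You correctly identify the obstacle: for $X_0$ minimal of general type with $\sigma(X_0)\neq 0$ and $\pi_1=1$, Kotschick's theorem forces the mod-$2$ Seiberg--Witten invariants of $-X_0$ to vanish, so the Fintushel--Stern formula on the reversed orientation gives nothing. Your proposed fix, however, is not carried out. You assert the existence of a simply-connected symplectic template $Y$ with $b^{\pm}\geq 2$, a suitably embedded square-zero torus, and $SW_{\pm Y}\not\equiv 0\pmod 2$; this is a nontrivial existence statement that you do not justify. Worse, you then ask that the knot surgery $X_0=Y_{K'}$ be ``minimal of general type and of signature one'': knot surgeries are essentially never complex surfaces, so this requirement is not achievable, and in any case it is incompatible with Kotschick's theorem (which would then again kill $SW_{-X_0}$). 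If you drop the general-type hypothesis you lose your appeal to Witten's theorem and must supply the mod-$2$ basic classes of $X_0$ by another route; either way the argument as written is incomplete.

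The paper resolves the orientation-reversed half by an \emph{asymmetric} choice of $X_1$ rather than by trying to make $SW_{-X_0}$ nontrivial. It keeps $X_0$ a simply-connected, non-spin, minimal surface of general type with $\sigma(X_0)=1$ (so $\basicClasses(X_0)\neq\emptyset$ and, by Kotschick, $\basicClasses(-X_0)=\emptyset$), and sets $X_1=(-\tilde X)\#\CP^2$ for a simply-connected symplectic $\tilde X$ with $\sigma(\tilde X)=0$ and $b^+(\tilde X)=b^+(X_0)-1$. Then $-X_1$ is a blow-up of a symplectic manifold, hence $\basicClasses(-X_1)\neq\emptyset$ by Taubes, while $X_1$ contains a sphere of self-intersection $+1$ with $b^+\geq 2$, forcing $\basicClasses(X_1)=\emptyset$. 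Thus in \emph{each} orientation one of $\basicClasses(\pm X_0)$, $\basicClasses(\pm X_1)$ is empty and the other is not, and no isometry can match them. This avoids entirely the need for a manifold with nonvanishing Seiberg--Witten invariants in both orientations.
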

\addtocounter{theorem}{-1}
\endgroup

\begin{proof}[Proof of Theorem~\ref{thm:DeltaZeroNotUniv}]
Let $X_0$ be a (closed) simply-connected, non-spin, minimal surface of general type with $\sigma(X_0)=1$, $b^+(X_0)\geq 50$. Infinitely many such manifolds are given by Chen surfaces \cite[Thm. 3.7]{KotschickOrReversalGeography}.
Let $\tilde X$ be a  (closed)  simply-connected, symplectic $4$-manifold with $\sigma(\tilde X) = 0$, $b^+(\tilde X) = b^+(X_0)-1$.
There are infinitely many such manifolds for any $X_0$ as above since $b^+(\tilde X)\geq 49$ \cite{AkhmedovHughesParkSymplecticPosSignature}.
Now we set  $X_1 := (-\tilde X)\#\CP^2$.

We will show that the pairs $(X_0, X_1)$ and $(-X_0,-X_1)$ are exotic and are not related by neither $(C,f)$ nor $(-C,f)$.
To see that $X_0$ is homeomorphic to $X_1$ notice that 
both $X_0, X_1$ are non-spin, have signature one and  $b^+(X_0) = b^+(X_1)$, consequently $X_0\approx X_1\approx b^+(X_0) \CP^2\# (b^+(X_0)-1)\overline{\CP}^2$ \cite{Freedman}.  

\newcommand{\SWModTwo}{SW_{\frac{\Z}{2\Z}}}
\newcommand{\basicClasses}{\cB_{\frac{\Z}{2\Z}}}
For a simply-connected $4$-manifold $M$, we denote by  $\basicClasses(M^4)\subset H^2(M)$ the set of basic classes  for the  mod-$2$ Seiberg-Witten invariant \cite[Def. 2.4.5]{GompfStipsicz}.

Since $X_0$ is a complex surface of general type, with $b^+(X_0)\geq 2$,  $\basicClasses(X_0)\neq \emptyset$ \cite{FriedmanMorganAlgebraicSW}. Moreover, since $\sigma(X_0) \neq 0$,  $\pi_1(X_0) = 1$,  \cite[Thm. 1]{KotschickGeometrization} implies that $\basicClasses(-X_0) = \emptyset$.
Being the blow-up of a symplectic manifold,   $-X_1$ is symplectic, hence $\basicClasses(-X_1)\neq \emptyset$ by \cite{TaubesSymplectic}. On the other hand, $X_1$ contains an embedded sphere of self-intersection one and $b^+(X_1)\geq 2$, thus $\basicClasses(X_1) \equiv 0$  \cite{fintushel_stern_blowup}. 
This shows that $X_0 \not \cong X_1$.

To prove item-\ref{itema}, now notice that if  $M_0, M_1,\dots,  M_m$, $m\in \N$ is a sequence of simply-connected, closed, oriented $4$-manifolds such that for each $i<m$
 $(M_i, M_{i+1})$ is related by  a cork in $\FamilyVanishing$, then Proposition~\ref{prop:DeltaVanNoVar}~\ref{prop:DeltaVanNoVarb} implies
 that $|\basicClasses(\pm M_0)| =  |\basicClasses(\pm M_m)|$, i.e. the cardinality of the set of basic classes is constant along the sequence.
 On the other hand $|\basicClasses(\pm X_0)| \neq  |\basicClasses(\pm X_1)|$.
 
Item-\ref{itemb} follows from Lemma~\ref{lemma:CMinusCrelates} and item-\ref{itema}.
\end{proof}

\begin{proof}[Proof of Corollary~\ref{thm1}] It is enough to show that $-W_n\in \FamilyVanishing$ and invoke  Theorem~\ref{thm:DeltaZeroNotUniv}.
By Proposition~\ref{prop:DeltaVanNoVar}~\ref{prop:DeltaVanNoVara} to see that the difference element  vanishes it is sufficient to see that~${\HMred_{-1}(-\partial W_n; \F) = 0}$.

In the case $n=1$,  this follows from \cite{AkbulutDurusoy} and  the cases $n=2,3$ follow from \cite{AkbulutKarakurtHFMazur}.
More generally it follows from \cite[Thm. 1.0.3]{HalesThesis} that for any $n$, $\HMred(-\partial W_n; \F) $ takes the form 
\begin{equation*}
	\HMred(-\partial W_n; \F) \approx \bigoplus_{s,i \in \Z} M(i,s)_{((s^2-s)- (i^2-i)- \min(0, 2(i-s)))},
\end{equation*}
where $M(i,s)_{r}$ is some torsion $\F[U]$-module supported in absolute grading $r$.
Since for any $s,i\in \Z$, $((s^2-s)- (i^2-i)- \min(0, 2(i-s)))$ is even, we see that the reduced homology of $-\partial W_n$ is supported in even degrees which proves the assertion.
\end{proof}

Notice that if  $\HMred_{-1}(\partial C; \F) = 0$ then $\Delta_{(C,f)} = 0$ for any involution $f$, thus  being not universal does not depend on the specific involution of the cork. We record this fact in a corollary for future use.

\begin{corollary} \label{Cor:invNotRel}Let $(C,f)$ be a cork with $\HMred_{-1}(\partial C; \F) = 0$, then any cork of the form $(\pm C, g)$, $g\in \mathrm{Diff}^+(\partial C)$ belongs to $\FamilyVanishingMir\bigcup \FamilyVanishingMir$ hence is not universal (recall that in the definition of cork we require $g$ to be an involution).
\end{corollary}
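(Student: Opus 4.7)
My plan is to deduce the corollary directly from Proposition~\ref{prop:DeltaVanNoVar}\ref{prop:DeltaVanNoVara} and Theorem~\ref{thm:DeltaZeroNotUniv}, which have done all the heavy lifting. Given an involution $g \in \mathrm{Diff}^+(\partial C)$ for which $(C,g)$ is a cork, I will first observe that $(-C,g)$ is simultaneously a cork: since $\mathrm{Diff}^+(\partial C) = \mathrm{Diff}^+(-\partial C)$, the involution $g$ makes sense on either boundary, and an (orientation-preserving) extension of $g$ to $C$ exists if and only if one to $-C$ does, so neither extends. Hence it suffices to treat the two corks $(C,g)$ and $(-C,g)$ separately.

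For the cork $(C,g)$, Proposition~\ref{prop:DeltaVanNoVar}\ref{prop:DeltaVanNoVara} places the difference element $\Delta_{(C,g)}$ in $\HMred_{-1}(\partial C;\F)$, and this group is zero by hypothesis. Therefore $\Delta_{(C,g)} = 0$ and $(C,g) \in \FamilyVanishing$. For the cork $(-C,g)$, unwinding the definition of $\FamilyVanishingMir$ I only need $(-(-C),g) = (C,g) \in \FamilyVanishing$, which has just been established. Notably I do \emph{not} need $\HMred_{-1}(-\partial C;\F)$ to vanish --- the mirror family is precisely what allows us to sidestep this asymmetry between $\partial C$ and $-\partial C$.

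Finally, to upgrade ``lies in $\FamilyVanishing \cup \FamilyVanishingMir$'' to ``not universal'', I will invoke Theorem~\ref{thm:DeltaZeroNotUniv}: a single $(C,g)$-twist (respectively $(-C,g)$-twist) is a length-one instance of a sequential relation in $\FamilyVanishing$ (respectively $\FamilyVanishingMir$), so the exotic pairs exhibited by that theorem are in particular not related by a single twist using either cork.

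I do not anticipate any genuine obstacle: the argument is essentially a bookkeeping exercise combining the grading statement of Proposition~\ref{prop:DeltaVanNoVar}\ref{prop:DeltaVanNoVara} with the definitions of $\FamilyVanishing$ and $\FamilyVanishingMir$. The only point requiring care is the verification that $(C,g)$ and $(-C,g)$ are simultaneously corks, which is the orientation check sketched above.
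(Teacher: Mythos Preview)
Your proposal is correct and follows essentially the same approach as the paper, which records the corollary immediately after the one-line observation that $\HMred_{-1}(\partial C;\F)=0$ forces $\Delta_{(C,g)}=0$ for every involution $g$. Your write-up is in fact more careful than the paper's, explicitly checking that $(C,g)$ and $(-C,g)$ are simultaneously corks and spelling out how Theorem~\ref{thm:DeltaZeroNotUniv} yields non-universality via a length-one sequence.
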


%%%%%%%%%%%%%%%%%%%%%%%%%%%%%%%%%%%%%%%%%%%%%%%%%%%%%%%%%%%%%%%%
%%%%%%%%%%%%%%%%%%%%%%%%%%%%%%%%%%%%%%%%%%%%%%%%%%%%%%%%%%%%%%%%
\section{Proof of Theorem~\ref{thm2}}\label{sec:proofThm2}

\subsection{Notions of complexity.}\label{subsec:ComplexityDef}
\newcommand{\Complexity}{\mathfrak{C}}

We begin by recalling the definition of complexity of an $h$-cobordism introduced by Morgan and Szab\'o in \cite{MorganSzabo99}.
Let $Z:X_0\to X_1$ be an $h$-cobordism between simply-connected $4$-manifolds.
 If $\partial X_0\neq \emptyset$ we further suppose that
$Z$ restricts to a \emph{cylindrical} cobordism $\partial X_0 \to \partial X_1$  denoted by $Z|_{\partial X_0}$. In this case, a preferred diffeomorphism $\Psi_\partial^Z:Z|_{\partial X_0}\overset{\cong}{\to} \partial X_0 \times [0,1]$ is part of the data defining $Z$.
We recall that a cylindrical cobordism is a cobordism where the underlying manifold is diffeomorphic to a cylinder, however  the maps identifying the boundary components need not be trivial.

Denote by $\cH(Z, X_0, X_1)$ the set of handle decompositions $\Delta$ of $Z$ such that:
			\begin{enumerate}
			\item $\Delta$ consists of $n\geq 0$   $2$-handles $\{h^2_j\}_{j=1}^n$ and $n$ $3$-handles $\{h^3_i\}_{j=1}^n$,
			The  $2$-handles are attached simultaneously to the interior of $X_0\times \{1\}\subset X_0\times [0,1]$ obtaining  $Z'$
			and the $3$-handles are then attached simultaneously to the interior of $\partial_+ Z'$.
		    
			\item Let the 4-manifold $Z_{1/2}\subset Z$ be  the middle 
			level of the cobordism obtained after attaching all the $2$-handles. 
			 Denote by $B_j\subset Z_{1/2}$ be the belt sphere of $h^2_j$ and by $A_i\subset Z_{1/2}$ be the attaching sphere of $h^3_i$.  Then we require that their algebraic intersection number in $Z_{1/2}$
			 is equal to the Kronecker delta:	$B_j\cdot A_i = \delta_{i j}$ and the normal bundle of $B_j$, $A_i$ is trivial for any $i,j$.

			 \item If $\partial X_0\neq \emptyset$ then the handle decomposition is induced by a Morse function $(Z,\partial_- Z, \partial_+ Z)\to ([0,1], 0,1)$ which on $Z|_{\partial X_0}$  is the pull-back via $\Psi_\partial^Z$
			 of the projection $\partial X_0\times [0,1]\to [0,1]$.
			\end{enumerate}
			
Given such  an  handle decomposition $\Delta $  we define its complexity as
\begin{equation*}
	\Complexity(\Delta) := \left(\sum_{i,j=1}^n|A_i\cap B_j|\right) - n \in \N,
\end{equation*}
i.e. the number of geometric intersections between the belt spheres of the $2$-handles and the attaching spheres of the $3$-handles minus 
the number of  $2$-handles. 

The \emph{complexity} of the $h$-cobordism $Z$ is then defined as
\begin{equation*}
	\Complexity(Z) := \min_{\Delta \in \cH(Z,X_0, X_1)} \Complexity(\Delta).
\end{equation*}

The proof of the h-cobordism  theorem (\cite{SmaleHCob}, see also \cite{MilnorLecturesHCob}) ensures that $\cH(Z,X_0,X_1)$ is not empty.

\begin{definition} Let $(C,f)$ be a contractible manifold with an involution of its boundary, we define the complexity of $(C,f)$ as
	\begin{equation*}
		\Complexity(C,f) := \min_{\substack{Z:C\to C \\ \text{$h$-cob.  rel $f$}}} \Complexity(Z),
	\end{equation*}
		i.e. the minimum of the complexities of   $h$-cobordisms $Z:C\to C$ relative to $f$. 
\end{definition}
By an $h$-cobordism $Z:C\to C$ relative to $f$ we mean that the map $\Psi_\partial^Z $ induces a map $\partial C\to \partial C$ which is equal to $f$.  
Notice that such $h$-cobordisms always exist, indeed  $C$ being contractible implies that  the twisted double $C\cup_f - C$ bounds a contractible $5$-manifold (c.f. the proof of \cite[Prop. 2.12]{Ladu}). It can be shown, but we will not need it, that this is the \emph{unique} $h$-cobordism relative to $f$  up to isomorphism \cite{Kreck2001}.

\begin{definition} \label{def:complexityPair}Let $(X_0, X_1)$ be a pair of \emph{homeomorphic}, simply-connected $4$-manifolds, possibly with connected boundary.
We define the complexity of the pair $(X_0, X_1)$ as
	\begin{equation*}
		\Complexity(X_0, X_1) := \min_{\substack{Z:X_0\to X_1 \\ \text{$h$-cobordism}}} \Complexity(Z),
	\end{equation*}
	i.e. the minimum of the complexities of   $h$-cobordisms $Z:X_0\to X_1$.
\end{definition}
The existence of an $h$-cobordism is guaranteed by  Theorem~\ref{Thm:corkDec}.

%In the closed case, the existence of an $h$-cobordism is guaranteed by \cite{Wall}.
%In the case $\partial X_0\neq \emptyset$, the existence of an $h$-cobordisms $Z:X_0\to X_1$ follows from~\cite{boyer_1986}.

%We have the following equivalent definition.
%\begin{lemma}\label{lem:cplPairCorks} 
%	\begin{equation*}
%		\Complexity(X_0, X_1) \geq  \frac 1 2  \min_{\substack{(C,f) \text{ cork} \\ \text{relating $(X_0,X_1)$}}} \Complexity(C,f),
%	\end{equation*}
%
%\end{lemma}
%\begin{proof}
%The proof of the cork decomposition theorem \cite{CHFS, Matveyev} applies to simply-connected $h$-cobordism $Z:X_0\to X_1$, regardless of $X_0$ having boundary, and shows that given an handle decomposition $\Delta \in \cH(Z,X_0, X_1)$,
% there exists a cork $(C,f)$, with $\Complexity(C,f)\leq 2\cdot \Complexity(\Delta)$, such that  $Z$ is a trivial cobordism outside of a  subcobordism isomorphic to an $h$-cobordism $C\to C$ relative to $f$. 
%\end{proof}

\subsection{Upper-bound on the complexity.}
In this subsection  $Z:X_0\to X_1$ will be an $h$-cobordism  of the type considered in Subsection~\ref{subsec:ComplexityDef}, in particular $\pi_1(X_0)=1$ and $Z|_{\partial X_0}$ is cylindrical.

Pick an handle decomposition $\Delta \in \cH(Z,X_0, X_1)$ and consider $P_{1/2}$,  a tubular neighbourhood of $\bigcup_{j} (A_j\cup B_j)$ in $Z_{1/2}$.
We define  the cobordism $P(\Delta):P_0(\Delta)\to P_1(\Delta)$ to be the cobordism obtained from $P_{1/2}\times [0,1]:P_{1/2}\to P_{1/2}$ by gluing, for each $j$, the handle $h^2_j$ of $\Delta$ along its \emph{belt} sphere  $B_j\hookrightarrow P_{1/2}\times\{0\}$ and the handle $h^3_j$ of $\Delta$ along its attaching sphere $A_j\hookrightarrow P_{1/2}\times\{1\}$.

 \begin{remark} \label{rem:PcorkTwist} $P(\Delta) $ is an $h$-cobordism of complexity $\Complexity(P(\Delta)) \leq \Complexity(\Delta)$ and, by construction, $P(\Delta)$   embeds as a subcobordism of $Z:X_0\to X_1$ in such a way that 
 \begin{equation*}
 	 X_1 \cong X_0 \setminus \interior{(P_0(\Delta))}\bigcup P_1(\Delta).
 \end{equation*}
  \end{remark}

Notice that there is  an identification $\partial P_{1/2} = \partial P_0(\Delta) = \partial P_1(\Delta)$ 
 since  the manifolds $P_{i}(\Delta)$, $i=0,1$,  are obtained by surgerying spheres in the interior of~$P_{1/2}$.

\begin{definition} We call  the $h$-cobordism $P(\Delta):P_0(\Delta)\to P_1(\Delta)$  constructed from $\Delta$ as above the  \emph{protocork cobordism} associated with $\Delta$ and we say that  the triple $(P_0(\Delta), P_1(\Delta), id)$, with $id: \partial P_0(\Delta)\to \partial P_1(\Delta) $  being the above identification, is the \emph{abstract protocork} associated with $\Delta$.
 \end{definition}
Clearly $(P_0(\Delta), P_1(\Delta), id)$ is an  abstract (possibly non-symmetric) protocork in the sense  of \cite[Def. 2.10]{Ladu}.
Next we generalize the definition of supporting protocork \cite{Ladu} to include non-symmetric protocorks.
\begin{definition} \label{def:supporting} Let $(C,f)$ be a cork.
We say that $(P_0(\Delta), P_1(\Delta), id)$    \emph{supports}  $(C,f)$ if there
exists an  embedding $\emb: P_0(\Delta)\to \interior{C}$ and a diffeomorphism  
\begin{equation*}
		F: C \to C\setminus \interior{\emb(P_0(\Delta))}\bigcup_{\emb} P_1(\Delta)
\end{equation*}
restricting to $f:\partial C \to \partial C$ on the boundary.
\end{definition}

\begin{lemma}\label{lem:cplCorkSupp}  Let $(C,f)$ be a cork. 
If the protocork $(P_0(\Delta), P_1(\Delta), id)$ supports $(C,f)$ then $\Complexity(C,f)\leq~\Complexity(\Delta)$.
\end{lemma}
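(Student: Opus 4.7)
The plan is to construct an explicit $h$-cobordism $Z:C\to C$ relative to $f$ whose complexity is at most $\Complexity(\Delta)$, so that the infimum defining $\Complexity(C,f)$ is controlled. The raw material for $Z$ is the protocork cobordism $P(\Delta)$, which by Remark~\ref{rem:PcorkTwist} is itself an $h$-cobordism of complexity at most $\Complexity(\Delta)$.

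First I would \emph{spread} $P(\Delta)$ across all of $C$. Using the embedding $\emb:P_0(\Delta)\hookrightarrow \interior{C}$ guaranteed by Definition~\ref{def:supporting}, set
\begin{equation*}
W \;:=\; \bigl(C\setminus \interior{\emb(P_0(\Delta))}\bigr)\times [0,1]\;\bigcup\; P(\Delta),
\end{equation*}
where the gluing identifies the side boundary $\emb(\partial P_0(\Delta))\times [0,1]$ of the first piece with the side boundary $\partial P_{1/2}\times [0,1]$ of $P(\Delta)$ via $\emb$. By construction $W$ is a cobordism from $\partial_-W=C$ to $\partial_+W=C\setminus\interior{\emb(P_0(\Delta))}\cup_{\emb} P_1(\Delta)$, cylindrical on $\partial C$ (since $P_0(\Delta)$ sits in $\interior{C}$, the boundary portion $\partial C\times[0,1]$ is untouched by the protocork piece). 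Because $P(\Delta)$ is an $h$-cobordism and the complement is a product, $W$ is an $h$-cobordism. Moreover, pasting the $2$/$3$-handle decomposition of $P(\Delta)$ (whose complexity is at most $\Complexity(\Delta)$) onto the trivial Morse function on the product factor yields a handle decomposition of $W$ in $\cH(W,\partial_-W,\partial_+W)$ whose complexity is exactly that of $P(\Delta)$.

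Next I would \emph{fold} $W$ into an $h$-cobordism from $C$ to itself using the supporting diffeomorphism. The hypothesis gives $F:C\to \partial_+W$ restricting to $f$ on the boundary, so precomposing the top identification of $W$ with $F^{-1}$ promotes $W$ to an $h$-cobordism $Z:C\to C$ without altering any handles, hence without altering complexity. It remains to verify that $Z$ is \emph{relative to $f$} in the sense of Section~\ref{subsec:ComplexityDef}: the cylindrical portion $Z|_{\partial C}$ is canonically $\partial C\times[0,1]$ at the bottom and, when read at the top through $F^{-1}$, is identified with $\partial C$ via $F^{-1}|_{\partial C}=f^{-1}=f$ (using that $f$ is an involution). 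Thus $\Psi^Z_\partial$ induces exactly $f$ on $\partial C$, confirming $Z$ is an $h$-cobordism of $C$ rel $f$, and
\begin{equation*}
\Complexity(C,f)\;\leq\;\Complexity(Z)\;=\;\Complexity(W)\;\leq\;\Complexity(P(\Delta))\;\leq\;\Complexity(\Delta).
\end{equation*}

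The main obstacle I anticipate is bookkeeping rather than substance: one must check that the combined Morse function on $W$ genuinely lies in $\cH(W,C,\partial_+W)$, which requires that the gluing preserves the belt-sphere/attaching-sphere intersection data $B_j\cdot A_i=\delta_{ij}$ inside the middle level of $W$. This is automatic because those intersections take place in $P_{1/2}\subset \interior{P(\Delta)}$, far from the gluing region $\partial P_{1/2}\times[0,1]$. The secondary delicate point is the boundary calculation for $Z$; the involutive nature of $f$ is exactly what guarantees that composing "identity on the cylinder" with "$f$ at the top" yields a rel-$f$ structure rather than a rel-$f^{-1}$ structure.
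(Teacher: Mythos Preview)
Your argument is correct and follows essentially the same route as the paper: build an $h$-cobordism $C\to C\setminus\interior{\emb(P_0)}\cup_\emb P_1$ by extending $P(\Delta)$ trivially over the complement, then re-identify the top using $F$ to obtain an $h$-cobordism rel $f$ whose complexity is at most $\Complexity(\Delta)$. The only cosmetic difference is that the paper stacks $P(\Delta)$ on top of $C\times I$ along $\emb(P_0)\subset C\times\{1\}$, whereas you glue $P(\Delta)$ laterally to $(C\setminus\interior{\emb(P_0)})\times[0,1]$; after corner smoothing these are the same cobordism, and the handle data live entirely inside $P(\Delta)$ in both descriptions.
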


\begin{proof} Let $\emb$ and $F$ as in Definition~\ref{def:supporting}. 
Glue $P(\Delta)$ to the trivial cobordism $C\times I$ using the map  $P_0(\Delta)\overset{\emb}{\to} C\times\{1\}$, obtaining 
a cobordism 
\begin{equation*}
	V: C\to C\setminus \interior{\emb(P_0(\Delta))}\bigcup_{\emb} P_1(\Delta). 
\end{equation*}
 $V$ is an  $h$-cobordism with an handle decomposition of complexity $\Complexity(\Delta)$ induced by the handle decomposition defining  $P(\Delta)$.

Now construct a cobordism $V':C\to C$ which is equal to the cobordism $V$ except for the identification map of the outcoming boundary
which is now given by $F: C\times\{0\}\to \partial_+ V$.
Consequently, $V'$ is an $h$-cobordism relative to $f:\partial C\to \partial C$ because by assumption $F$ restricts to $f$ on the boundary.

Now $\Complexity(C,f)\leq \Complexity(V') = \Complexity(V) \leq \Complexity(\Delta)$.
Where the equality is due to the fact that changing the identification map of the outcoming boundary does not affect the complexity because it does not change the set of Morse functions for the cobordism.
\end{proof}

We will also make use of the following.
\begin{lemma} \label{lem:CorkRelExoticPair} Let $(C,f)$ be a cork.  If $(X_0, X_1)$ is related by $(C,f)$ then $\Complexity(X_0, X_1) \leq \Complexity(C,f)$.
\end{lemma}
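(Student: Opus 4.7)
The plan is to take an $h$-cobordism $Z:C\to C$ relative to $f$ realising $\Complexity(C,f)$ and ``insert'' it inside the trivial cobordism on $X_0$ along the cork embedding, producing an $h$-cobordism $X_0\to X_1$ whose complexity does not exceed $\Complexity(Z)$.

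Concretely, let $\emb:C\to X_0$ be an embedding witnessing the cork relation; after a small isotopy we may assume $\emb(C)\subset \interior{X_0}$. Set $X':=X_0\setminus \interior{\emb(C)}$, pick $Z:C\to C$ relative to $f$ with $\Complexity(Z)=\Complexity(C,f)$, and fix its boundary trivialisation $\Psi_\partial^Z$. I would then form
\begin{equation*}
W := (X'\times [0,1])\bigcup_{\emb\circ\Psi_\partial^Z} Z,
\end{equation*}
where $\emb\circ\Psi_\partial^Z$ identifies the cylindrical piece $Z|_{\partial C}$ with $\emb(\partial C)\times [0,1]\subset \partial X'\times [0,1]$. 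Because $\Psi_\partial^Z$ restricts to the identity at level $0$ and to $f$ at level $1$, the incoming boundary of $W$ is $X'\cup_\emb C\cong X_0$ while the outgoing boundary is $X'\cup_{\emb\circ f} C\cong X_1$.

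Next I would verify that $W$ is an $h$-cobordism, cylindrical on $\partial X_0$ in the bounded case. Cylindricality is automatic since the embedding is interior, so $W|_{\partial X_0}=\partial X_0\times [0,1]$. Simple connectivity of $W$ follows from van Kampen applied across the connected hypersurface $\partial C\times [0,1]$, using $\pi_1(Z)=\pi_1(C)=1$. The inclusion $X_0\hookrightarrow W$ is a homotopy equivalence by pasting the deformation retractions $X'\times [0,1]\to X'\times\{0\}$ and $Z\to C$ along $\partial C\times [0,1]$; the argument for $X_1$ is symmetric.

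The most delicate step, and the one I would treat most carefully, is transporting a handle decomposition $\Delta\in\cH(Z,C,C)$ with $\Complexity(\Delta)=\Complexity(C,f)$ to an element of $\cH(W,X_0,X_1)$ of the same complexity. The $2$- and $3$-handles of $\Delta$ are attached inside $\interior(C\times\{1\})$, hence inside $\interior(\emb(C))\times\{1\}\subset \interior{X_0}\times\{1\}$, so the attaching data transfer verbatim to $W$. The belt spheres $B_j$, the attaching spheres $A_i$ and their geometric and algebraic intersections all live in the middle level inside the $Z$-portion of $W$, so they are unchanged. Compatibility of Morse functions across the gluing is clean because $X'\times [0,1]$ is a genuine product, and in the case $\partial X_0\neq\emptyset$ the inherited Morse function is still a pullback of the projection on $W|_{\partial X_0}=\partial X_0\times [0,1]$. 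This yields a decomposition in $\cH(W,X_0,X_1)$ of complexity $\Complexity(\Delta)$, giving $\Complexity(X_0,X_1)\leq \Complexity(W)\leq \Complexity(C,f)$.
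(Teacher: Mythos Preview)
Your proposal is correct and follows essentially the same approach as the paper: glue the product $X'\times I$ (the paper writes $M\times I$) to the chosen $h$-cobordism $Z$ along $\emb\circ\Psi_\partial^Z$, and observe that a Morse function for $Z$ extends over the product piece without new critical points. You simply spell out in more detail the verifications (boundary identification, $h$-cobordism property, handle transport) that the paper leaves implicit.
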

\begin{proof} Let $\emb: C\to X_0$ denote the embedding of Definition~\ref{Def:related} and let $M = X_0 \setminus (\interior{\emb(C)})$.
	Let $Z:C\to C$ be an $h$-cobordism relative to $f$ realizing $ \Complexity(C,f)$.
	By gluing $M\times I$ to $Z$ via the diffeomorphism
	\begin{equation*}
	 Z|_{\partial C}\overset{\Psi^Z_\partial}{\to}  \partial C \times I \overset{\emb \times id_I}{\to} \partial M \times I,
	\end{equation*}	 
	we obtain an $h$-cobordism $V$ from $X_0$ to a manifold diffeomorphic to $X_1$ which has complexity lower or equal than $\Complexity(Z)$ because a Morse function for $Z$ extends to a Morse function for $V$ without critical points over $M\times I$.
\end{proof}

\subsection{The Akbulut-Ruberman construction.}
Given a cobordism $X:Y_0\to Y_1$ we will denote by $X^\dagger:Y_1\to Y_0$
the cobordism obtained by changing the orientation on $X$ and swapping the roles of incoming and outcoming end.

We have the following result due to  Akbulut and Ruberman.
\begin{theorem}[Akbulut-Ruberman \cite{AkbulutRubermanAbsolutelyExotic}]\label{AkbulutRuberman} Let $(C,f)$ be a cork, then there exist a $3$-manifold $N$ and a cobordism $Q: \partial C\to N$, canonically associated with $(C,f)$, such that 
\begin{enumerate}
\item  \label{AkbulutRubermana} the composite cobordism $Q^\dagger\circ Q$ is isomorphic to the product cobordism $\partial C \times I$,
\item \label{AkbulutRubermanb}the group of diffeomorphisms of $N$ modulo isotopy is isomorphic to $\oplus_{i=1}^n \Z^2$ for some $n\geq 4$ and every 
		mapping class extends over $Q$ (and hence over $Q^\dagger$) in such a way that is isotopic to 	the identity on $\partial C$.
\item the manifolds $X_0:= C\bigcup_{id} Q$ and $X_1 = C\bigcup_f Q $ are simply-connected and $(X_0,X_1)$
	is an   exotic pair.
\end{enumerate}
\end{theorem}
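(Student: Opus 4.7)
The plan is to reproduce the construction of Akbulut and Ruberman from \cite{AkbulutRubermanAbsolutelyExotic}. The essential ingredient is a purely 3-dimensional result due to Ruberman: for any closed oriented 3-manifold $Y$ one can construct an invertible cobordism $Q: Y \to N$ to a closed 3-manifold $N$ whose orientation-preserving mapping class group is free abelian of rank $n \geq 4$, such that every mapping class of $N$ is realised by a self-diffeomorphism of $N$ extending over $Q$ as the identity on $Y$. Apply this with $Y = \partial C$ to obtain $Q$ and $N$ satisfying items (1) and (2); the splitting $\bigoplus_{i=1}^n \Z^2$ and the rank bound arise from performing the construction on a link whose exterior is hyperbolic, with Mostow rigidity pinning down $\mathrm{MCG}(N)$.

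Next I would verify item (3). Since $C$ is contractible, van Kampen applied to $X_i = C \cup_{\partial C} Q$ reduces $\pi_1(X_i)$ to a quotient of $\pi_1(Q)$; by choosing $Q$ from the construction (possibly after attaching $2$-handles along generators of $\pi_1(Q)$, which preserves both invertibility and item (2)) the resulting $X_i$ are simply connected. For the homeomorphism $X_0 \approx X_1$, Theorem~\ref{Thm:corkDec} provides a self-homeomorphism $F: C \to C$ extending $f$, and then $F \cup \mathrm{id}_Q$ is a homeomorphism $X_0 \to X_1$.

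The remaining and most delicate assertion is that $X_0 \not\cong X_1$. Suppose for contradiction that $\phi: X_0 \to X_1$ is a diffeomorphism, with restriction $\phi_N := \phi|_N \in \mathrm{Diff}^+(N)$. By item (2), pick an extension $\widetilde{\phi_N^{-1}}: Q \to Q$ of $\phi_N^{-1}$ with $\widetilde{\phi_N^{-1}}|_{\partial C} = \mathrm{id}$; then $\mathrm{id}_C \cup \widetilde{\phi_N^{-1}}$ is a well-defined self-diffeomorphism of $X_1$ (both factors are the identity on the gluing locus $\partial C$), and after composing $\phi$ with it I may assume $\phi|_N = \mathrm{id}_N$. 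Now attach $Q^\dagger$ along $N$ via the identity on both sides: item (1), together with the fact that the invertibility isomorphism $Q \cup Q^\dagger \cong \partial C \times I$ is the identity on $\partial C$, yields
\[
    X_0 \cup_{\mathrm{id}_N} Q^\dagger \ \cong\ C \cup_{\mathrm{id}} (\partial C \times I) \ \cong\ C, \qquad
    X_1 \cup_{\mathrm{id}_N} Q^\dagger \ \cong\ C \cup_f (\partial C \times I).
\]
Extending $\phi$ by $\mathrm{id}_{Q^\dagger}$ produces a diffeomorphism $\Phi: C \to C \cup_f (\partial C \times I)$ whose restriction to $\partial C$ (identified with the outgoing boundary of $Q^\dagger$) is the identity. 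Absorbing the attached collar into a collar of $\partial C \subset C$ then gives a diffeomorphism $r: C \cup_f (\partial C \times I) \to C$ whose restriction to $\partial C \times \{1\}$ is $f$, because the cylinder is glued to $C$ via $f$ and collar absorption transports this identification to the new boundary. Hence $r \circ \Phi: C \to C$ is a self-diffeomorphism of $C$ restricting to $f$ on $\partial C$, contradicting the defining property of the cork $(C, f)$.

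The main obstacle is the 3-manifold topology input at the very beginning: constructing the pair $(Q, N)$ with the prescribed free abelian mapping class group \emph{and} the trivial extension property along $\partial C$. Once that is in hand, the remainder is a formal game with invertible cobordisms.
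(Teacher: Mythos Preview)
The paper does not actually prove this theorem: it is quoted as a result of Akbulut and Ruberman, with a reference to the proof of their Theorem~A, plus one added remark justifying item~\ref{AkbulutRubermana} in the precise form $Q^\dagger\circ Q\cong \partial C\times I$ (rather than merely ``$Q$ is invertible''), namely that the concordance used is the exterior of a slice disk for $J=11n42$ whose double is an unknotted $2$-sphere in $S^4$, so the concordance is inverted by its own reverse.

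Your outline is exactly the Akbulut--Ruberman argument, and your proof of item~(3) --- normalising $\phi|_N$ to the identity via item~\ref{AkbulutRubermanb}, capping with $Q^\dagger$, and using item~\ref{AkbulutRubermana} to produce a self-diffeomorphism of $C$ restricting to $f$ --- is correct and is the mechanism in \cite{AkbulutRubermanAbsolutelyExotic}. Two cosmetic points: the fallback of ``attaching $2$-handles along generators of $\pi_1(Q)$'' is unnecessary (in the construction $Q$ is built from concordance exteriors inside $\partial C\times I$, so $\partial C\hookrightarrow Q$ is already $\pi_1$-surjective) and would in any case interfere with items~\ref{AkbulutRubermana}--\ref{AkbulutRubermanb}; and Mostow rigidity is applied to the hyperbolic JSJ pieces of $N$, not to $N$ itself.
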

This theorem follows from the proof of \cite[Thm. A]{AkbulutRubermanAbsolutelyExotic}. We remark
that $(X_0,X_1)$ is an \emph{absolutely } exotic pair, meaning that there is no diffeomorphism $X_0\to X_1$, in other words we are not fixing the behaviour on the boundary.
\begin{remark}
In \cite{AkbulutRubermanAbsolutelyExotic} it is not explicitly said that the inverse of the cobordism $Q$ is given by $Q^{\dagger}$
but this follows from the fact that the concordance $C$ from the unknot to the knot $J=11n42$ constructed in \cite[Prop. 2.6]{AkbulutRubermanAbsolutelyExotic} is invertible by considering $C^\dagger$, indeed the double of the the slice disk inducing $C$
is an unknotted sphere in $\SS^4$ \cite[Prop. 2.6]{AkbulutRubermanAbsolutelyExotic}.
\end{remark}

The following will be our key proposition.
\begin{proposition}\label{prop:cplCorkAK} Let $(C,f)$ be a cork and let $(X_0,X_1)$ be the simply-connected exotic pair of manifolds with boundary 
obtained by applying the Akbulut-Ruberman construction. 
Then $ \Complexity (C,f) = \Complexity(X_0, X_1)$.
\end{proposition}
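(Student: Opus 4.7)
The plan is to prove the two inequalities separately. The $\leq$ direction follows from Lemma~\ref{lem:CorkRelExoticPair}: the Akbulut--Ruberman construction exhibits $(X_0,X_1)$ as a cork-twist pair for $(C,f)$ via the obvious inclusion $\emb: C \hookrightarrow X_0 = C\cup_{id} Q$. Indeed, removing $\emb(C)$ leaves $Q$, and regluing $C$ via $\emb\circ f = f$ produces $C\cup_f Q = X_1$, so $(X_0,X_1)$ is related by $(C,f)$ and the lemma yields $\Complexity(X_0,X_1)\leq \Complexity(C,f)$.

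For the reverse inequality, I would start with an $h$-cobordism $Z:X_0\to X_1$ with cylindrical boundary realizing $\Complexity(X_0,X_1)$, and use the invertibility of $Q$ to cap it off into an $h$-cobordism $C\to C$ relative to $f$. Concretely, form the $5$-manifold
\begin{equation*}
Z' := Z \bigcup_{N\times I} (Q^\dagger \times I),
\end{equation*}
where the product cylinder $Q^\dagger\times I$ is attached to $Z$ along the side boundary cobordism $Z|_{\partial X_0}$, identifying it with $N\times I\subset \partial(Q^\dagger\times I)$ via $\Psi^Z_\partial$. By item~\ref{AkbulutRubermana} of Theorem~\ref{AkbulutRuberman}, $Q\cup_N Q^\dagger \cong \partial C\times I$, so the two ends become $\partial_- Z' = X_0\cup_N Q^\dagger \cong C$ and $\partial_+ Z' = X_1\cup_N Q^\dagger \cong C$, while the remaining side is the cylinder $\partial C\times I$ inherited from the $\partial C$-end of $Q^\dagger\times I$.

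Next I would verify three properties of $Z'$: (a) it is an $h$-cobordism, which follows since $Z\simeq X_0\simeq X_1$ and $Q^\dagger\times I$ deformation retracts onto $Q^\dagger$, so a Mayer--Vietoris / deformation argument shows both end-inclusions are homotopy equivalences; (b) any minimizing handle decomposition $\Delta\in \cH(Z,X_0,X_1)$ induces a decomposition $\Delta'\in \cH(Z',C,C)$ with $\Complexity(\Delta') = \Complexity(\Delta)$, because the cylindrical-boundary condition places all $2$- and $3$-handles of $\Delta$ in the interior of $X_0\times I\subset Z$, so they survive unchanged in $Z'$ with the same attaching and belt spheres and the same intersection data, while the Morse function extends by the projection $Q^\dagger\times I\to I$ on the attached piece; (c) $Z'$ is relative to $f$, reflecting the asymmetric gluings at the two ends --- identity on the $X_0$ side, $f$ on the $X_1$ side --- so that the trivialization $\Psi^{Z'}_\partial$ of the $\partial C\times I$ boundary induces precisely $f:\partial C\to \partial C$.

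The main obstacle is a subtlety with properties (b) and (c): the cylindrical cobordism $Z|_{\partial X_0}$ a priori realizes a nontrivial mapping class $\varphi\in \mathrm{MCG}(N)$ at its outgoing end, which would twist the outgoing $Q^\dagger$ and naively give $\partial_+Z' = X_1\cup_\varphi Q^\dagger$ rather than $X_1\cup_{id} Q^\dagger$, disrupting both the diffeomorphism $\partial_+Z'\cong C$ and the relative-$f$ condition. This is exactly where item~\ref{AkbulutRubermanb} of Theorem~\ref{AkbulutRuberman} becomes essential: every mapping class of $N$ extends across $Q^\dagger$ to a self-diffeomorphism isotopic to the identity on $\partial C$, so we may absorb $\varphi$ into a self-diffeomorphism of $Q^\dagger$ that fixes $\partial C$ up to isotopy. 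After this adjustment, $\partial_+Z'\cong X_1\cup_{id} Q^\dagger\cong C$ via a diffeomorphism which is the identity on $\partial C$, preserving the relative-$f$ condition. Combining (a)--(c) gives $\Complexity(C,f)\leq \Complexity(Z')\leq \Complexity(Z) = \Complexity(X_0,X_1)$, completing the proof.
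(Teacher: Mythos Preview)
Your argument is correct. Both inequalities are established, and the delicate point---that the cylindrical side of $Z$ may carry a nontrivial mapping class of $N$---is handled exactly as it must be, via item~\ref{AkbulutRubermanb} of Theorem~\ref{AkbulutRuberman}. (One cosmetic slip: you call $\Complexity(X_0,X_1)\leq \Complexity(C,f)$ the ``$\leq$ direction'', but relative to the statement $\Complexity(C,f)=\Complexity(X_0,X_1)$ it is the $\geq$ direction.)

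The route, however, differs from the paper's in its packaging. The paper works at the $4$-manifold level: from a minimizing $\Delta$ it extracts the associated protocork $(P_0,P_1,id)$, embeds $P_0\subset X_0$, and then composes a chain of explicit diffeomorphisms $F_3\circ F_2\circ F_1\circ F_0$ (built from the inverse $\Phi:Q\cup Q^\dagger\to \partial C\times I$ and the mapping-class extension of item~\ref{AkbulutRubermanb}) to exhibit this protocork as \emph{supporting} $(C,f)$ in the sense of Definition~\ref{def:supporting}; Lemma~\ref{lem:cplCorkSupp} then converts this into the desired inequality. You instead work directly at the $5$-manifold level, gluing $Q^\dagger\times I$ onto the cylindrical side of $Z$ to produce in one stroke an $h$-cobordism $Z':C\to C$ relative to $f$ with the same handle data. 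Your construction is essentially what one obtains by unfolding the proof of Lemma~\ref{lem:cplCorkSupp} inside the paper's argument, so the two proofs use identical ingredients; yours is more direct and bypasses the protocork formalism, while the paper's version has the advantage of isolating the ``supporting protocork'' notion, which is of independent interest elsewhere in the paper.
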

\begin{proof} Using the notation of Theorem~\ref{AkbulutRuberman}, $X_0 = C\bigcup_{id} Q$ and $X_1 = C\bigcup_f Q$.
	Let $\Delta$ be an handle decomposition of an $h$-cobordism realizing $\Complexity(X_0, X_1)$
	and let $(P_0, P_1, id)$ be the abstract protocork  associated with $\Delta$.
	As noticed in Remark~\ref{rem:PcorkTwist}, $P_0$ embeds in $X_0 $ and there is a  diffeomorphism
	$\tilde F_1: (X_0\setminus \interior{P_0}) \bigcup_{id}  P_1 \to  X_1$.
	Thanks to  Theorem~\ref{AkbulutRuberman}~\ref{AkbulutRubermanb} we can extend $\tilde F_1$ over $Q^\dagger$ obtaining a map
	\begin{equation*}
		F_1:  (X_0\setminus \interior{P_0}) \bigcup_{id}  P_1 \bigcup_{id} Q^\dagger \to  X_1\bigcup_{id} Q^\dagger
	\end{equation*}
	which restricts to the identity over the boundary $\partial C$.
	Let $\Phi: Q \cup_{id} Q^\dagger \to \partial C \times [0,1]$ be the diffeomorphism fixing the boundary that comes from 
	Theorem~\ref{AkbulutRuberman}~\ref{AkbulutRubermana}.
	We define $F_2 = id \bigcup \Phi$	 as the diffeomorphism 
	\begin{equation*}
		F_2: C\bigcup_{f} Q \bigcup_{id} Q^\dagger \to C\bigcup_{f}\partial C \times I
	\end{equation*}
	obtained by extending $\Phi$ as the identity over $C$.
	Define $F_3: C\bigcup_{f}\partial C \times I\to C\bigcup_{id}\partial C \times I$
	as the diffeomorphism  equal to $f\times id_I$ on the collar $\partial C \times I$ and equal to the identity on $C$.

	We have a diffeomorphism $G = id_C \bigcup \Phi$ between $C\bigcup_{id} \partial C \times I$ and $X_0\bigcup_{id} Q^\dagger$.
	%We use $G$ to  embed  $P_0$  in $C\bigcup_{id} \partial C \times I$,	and we denote this embedded copy by $\tilde{P_0}$.
	The diffeomorphism $G$ induces a diffeomorphism $F_0$,
	\begin{equation*}
		F_0: \left((C\bigcup_{id} \partial C \times I)\setminus \interior{G^{-1}({P_0})} \right) \bigcup_{G^{-1}} P_1 
		\to (X_0\setminus \interior{P_0}) \bigcup_{id} P_1\bigcup_{id} Q^\dagger,
	\end{equation*}
	restricting to the identity on the boundary.
		
	\begin{figure}
	\includegraphics[scale=0.7]{./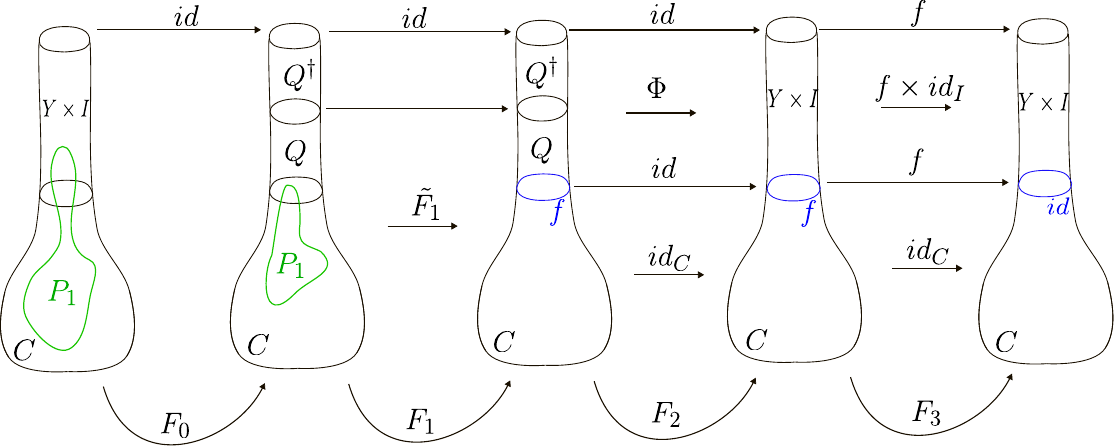}
	\caption{ \label{PicMaps} Composition of the diffeomorphisms used in the proof of Proposition~\ref{prop:cplCorkAK}. 
	For $i\geq 1$, the maps $F_i$ are obtained by 
	stacking diffeomorphisms of cobordisms as showed in the picture.
	The map $F_0$ is equal to  $G$ over the complement of $P_1$ and is equal to  the identity $P_1\to P_1$ over~$P_1$. }

\end{figure}
	
	The composition $F_3 \circ F_2\circ F_1\circ F_0$ which restricts to $f$ on the boundary (this is shown in  Figure~\ref{PicMaps}), 
	shows that the protocork $(P_0, P_1, id)$ supports $(C,f)$, hence by Lemma~\ref{lem:cplCorkSupp}
	\begin{equation*}
		\Complexity(C,f)\leq \Complexity(\Delta) = \Complexity (X_0,X_1).
	\end{equation*}
	Since $(C,f)$ relates $(X_0,X_1)$ by construction, also the reversed inequality holds by Lemma~\ref{lem:CorkRelExoticPair}.
\end{proof}

We are finally ready to prove Theorem~\ref{thm2}.
\begin{proof}[Proof of Theorem~\ref{thm2}.]
In \cite{MorganSzabo99}, Morgan and Szab\'o  construct an infinite family  of inertial $h$-cobordisms $Z_n: X_{n}\to X_{n}$ $n\in \N$,
such that $\Complexity (Z_n) \to \infty$ as $n\to \infty$.
Applying the cork decomposition theorem \cite{CHFS, Matveyev} to $Z_n$, we obtain a family of corks $(C_n,f_n)$ realizing $\Complexity (Z_n) $, hence $\Complexity (C_n, f_n) \to \infty$ as $n\to \infty$.

Let $(X_{n,0}, X_{n,1})$ be the simply-connected exotic pairs obtained by applying the Akbulut-Ruberman construction to $(C_n, f_n)$.
Then by Proposition~\ref{prop:cplCorkAK}, we see that $\Complexity(X_{n,0}, X_{n,1})\to \infty$ as $n\to \infty$.

Consequently, it  is not possible to find a cork relating $(X_{n,0}, X_{n,1})$ for any $n\in \N$.

\end{proof}
\bibliographystyle{alpha}
\bibliography{pcorkPaper}
	
\paragraph{}
\textsc{Max Planck Institut f\"{u}r Mathematik, Bonn, Germany}\newline
\phantom{aa} E-mail address:  \texttt{ladu@mpim-bonn.mpg-de}

\end{document}